\documentclass[reqno,12pt,letterpaper]{amsart}
\usepackage{amsmath,amssymb,amsthm,graphicx,mathrsfs,url}
\usepackage{enumitem}

\usepackage{graphicx}
\usepackage{amsmath, amscd}
\usepackage[usenames,dvipsnames]{color}
\usepackage[colorlinks=true, linkcolor=Red, citecolor=Green]{hyperref}

\newtheorem{theo}{Theorem}[section]
\newtheorem{prop}{Proposition}[section]

\newtheorem{corr}{Corollary}[section]
\newtheorem{lemm}[prop]{Lemma}
\theoremstyle{definition}
\newtheorem{defi}[prop]{Definition}

\numberwithin{equation}{section}

\newcommand{\R}{\mathbb{R}}
\newcommand{\N}{\mathbb{N}}

\newcommand{\C}{\mathbb{C}}

\newcommand{\e}{\mathrm{e}}

\let\Re=\Real

\let\Im=\Imag
\def\C{\mathbb {C}}
\def\N{{\mathbb N}}
\def\ep{\epsilon}

\def\tg{T_{\gamma}}

\def\lc{{\mathcal L}}
\def\cal{\mathcal}
\def\dist{{\rm dist}}
\def\tg{\tilde{\gamma}}

\def\pD{\partial D}
\def\cD{{\mathcal D}}

\def\tz{\tilde{\zeta}}

\title[Length spectrum of periodic rays]{Length spectrum of  periodic rays for billiard flow}

\author[V. Petkov]{Vesselin Petkov}
\address{Université de Bordeaux, Institut de Mathématiques de Bordeaux, 351, Cours de la Libération, 33405 Talence, France}
\email{petkov@math.u-bordeaux.fr}

\begin{document}

\maketitle
\begin{abstract} We study for several compact strictly convex disjoint obstacles the length spectrum $\lc$ formed by the lengths of all primitive periodic reflecting rays. We prove the existence of sequences $\{\ell_j\},\: \{m_j\}$ with $\ell_j \in \lc,\: m_j \in \N$ such that the condition (LB) related to the dynamical zeta function $\eta_D(s)$ is satisfied. This condition implies the existence of  lower bound for the number of the scattering resonances for Dirichlet Laplacian. We construct such sequences under some separation condition for a small subset of $\lc$ corresponding to lengths of the periodic rays with even reflexions. Our separation condition is weaker than the assumption of exponentially separated length spectrum $\lc.$ Moreover, we show that the periodic orbits in the phase space are exponentially separated.
\end{abstract}

{\bf Keywords:} billiard flow, periodic reflecting rays, length spectrum, separation condition

\section{Introduction}
Let $D_1, \dots, D_r \subset \R^d,\: {r \geqslant 3},\: d \geqslant 2,$ be compact strictly convex disjoint obstacles with $C^{\infty}$ smooth boundary and let $D = \bigcup_{j= 1}^r D_j.$  We assume that every $D_j$ has non-empty interior and throughout this paper we suppose the following non-eclipse condition
\begin{equation}\label{eq:1.1}
D_k \cap {\rm convex}\: {\rm hull} \: ( D_i \cup D_j) = \emptyset, 
\end{equation} 
for any $1 \leqslant i, j, k \leqslant r$ such that $i \neq k$ and $j \neq k$.
Under this condition all periodic trajectories for the billiard flow $\varphi_t$ in $\Omega  = \R^d \setminus \mathring{D}$ are ordinary reflecting ones without tangential intersections to the boundary  $\pD$.  We consider the (non-grazing) billiard flow {$\varphi_t$} (see \cite[Section 2.2]{chaubet2022}, \cite[Section 2]{Petkov2024} for the definition) and the periodic trajectories will be called periodic rays. For any periodic ray $\gamma$, denote by  $\tau(\gamma) > 0$ its period, by $\tau^\sharp(\gamma) > 0$ its primitive period, and by $m(\gamma)$ the number of reflections of $\gamma$ at the obstacles.  Denote by $\mathcal P$ the set of all oriented periodic rays and  by $P_\gamma, \: \gamma \in \mathcal P,$ the associated linearised Poincar\'e map (see \cite[Section 2.3]{petkov2017geometry} for the definition). Consider the Dirichlet dynamical zeta function
\begin{equation} \label{eq:1.2}
 \eta_\mathrm{D}(s) = \sum_{\gamma \in \mathcal{P}} (-1)^{m(\gamma)} \frac{ \tau^\sharp(\gamma) \e^{-s\tau(\gamma)}}{|\det(\mathrm{Id}-P_\gamma)|^{1/2} }, \: \Re s \gg 1.
\end{equation}
  We have the estimates (see for instance \cite[Appendix] {petkov1999zeta})
 \begin{equation} \label{eq:1.3}
  C_1 e^{\mu_1 \tau(\gamma)} \leq |\det(\mathrm{Id}-P_\gamma)| \leq  e^{\mu_2 \tau(\gamma)}, \: \forall \gamma \in \mathcal P
 \end{equation} 
 with  constants $C_1 > 0, \: 0 < \mu_1 < \mu_2$. The series $\eta_D(s)$ is absolutely convergent and not vanishing for sufficiently large $\Re s.$
 
  The zeta function $\eta_D(s)$ is important for the analysis of the distribution of the scattering resonances related to the Laplacian in $\R^d \setminus D$ with Dirichlet  boundary conditions on $\partial D.$ For more details we refer to \cite[Section 1]{ikawa1990zeta}, \cite[Section 1]{chaubet2022}.
It was proved in \cite[Theorem 1 and Theorem 4]{chaubet2022} that $\eta_D$  admits a { \it meromorphic continuation} to $\C$ with simple poles and integer residues. 
There is a conjecture that $\eta_D$ cannot be prolonged as {\it entire function}.  This conjecture was established for obstacles with real analytic boundary in \cite[Theorem 3]{chaubet2022} and for obstacles with sufficiently small diameters \cite{ikawa1990zeta}, \cite{stoyanov2009poles} and $C^{\infty}$ smooth boundary. 
 
The difficulties to examine the analytic singularities of $\eta_D(s)$ are related to the change of signs of the coefficients of the Dirichlet series (\ref{eq:1.2}) which may produce cancellations. To study these cancelations, introduce the distribution 
\begin{equation*} 
\mathcal F_\mathrm D(t) = \sum_{\gamma \in \mathcal P} \frac{(-1)^{m(\gamma)} \tau^{\sharp}(\gamma)  \delta(t - \tau(\gamma))}{|\det(\mathrm{Id} - P_{\gamma})|^{1/2}} \in  {\mathcal S}'(\R^+).\end{equation*}
Let $\psi \in C_0^{\infty}(\R; \R_+)$ be an even function with ${\rm supp}\: \psi\subset \left[-1, 1\right]$ such that
$\psi(t) =1$ for $ |t| \leqslant 1/2.$ Let $(\ell_j)_{j \in \N}$ and $(m_j)_{j \in \N}$ be sequences of positive numbers such that $\ell_j \to \infty,\: m_j\to \infty$ as $\ j \to \infty$ and
$$\ell_j \geqslant d_0 = 2\min_{k \neq m} {\rm dist}\: (D_k, D_m)> 0, \: m_j \geqslant \max\big\{1, \frac{1}{d_0}\big\}.$$
 Define
$$\psi_j(t) = \psi(m_j (t- \ell_j)),\: t \in \R.$$
\begin{defi} We say that the condition $(LB)$ for $F_D(t)$ is satisfied if there exist constants $\alpha _0 > 0, \alpha_1 > 0, c_1 > 0$ such that
for all $\beta \geq \alpha_1$ there exist sequences $(\ell_j), (m_j)$ with $\ell_j \nearrow \infty$ as $j \to \infty$ and $e^{\beta \ell_j} \leqslant m_j \leqslant \e^{2 \beta \ell_j}$ satisfying 
\begin{equation} \label{eq:1.4}   
|\langle \mathcal F_\mathrm D, \psi_j \rangle | \geqslant c_1\e^{- \alpha_0 \ell_j},\: \forall j.
\end{equation}
\end{defi}
The estimate (\ref{eq:1.4}) gives exponentially small lower bounds for the sum of the contributions to $\langle F_D,  \psi_j\rangle $  of  the rays $\gamma \in \mathcal P$ with lengths
 $$\tau(\gamma) \in (\ell_j - e^{- m_j}, \ell_j + e^{- m_j}),\: j \in \N.$$
 
If $(LB)$ is satisfied, one obtains two important corollaries:

 (i)  The modified Lax-Phillips conjecture (MLPC) for scattering resonances introduced by Ikawa \cite[page 212]{ikawa1990poles} holds. (MLPC) says that there exists a strip $\{z \in \C: 0 < \Im z \leq \alpha\}$ containing an infinite number of scattering resonances for Dirichlet Laplacian in $\R^d \setminus D$. For definition of scattering resonances and more precise results the reader may consult  {Chapter 5 in \cite{lax1989scattering}} for $d$ odd and {Chapter 4 in} \cite{dyatlov2019mathematical}).
 
 (ii)  The function  $\eta_D(s)$  has infinite number of poles in some strip $\{ s \in \C: \Re s \geq \delta\}$  and we have a lower bound of the counting function of the poles in this strip (see \cite[Theorem 1.1]{Petkov2024}). In fact, the result in \cite[Theorem 1.1]{Petkov2024} has been stated assuming that $\eta_D(s)$ cannot be prolonged as an entire function, however the proof works if  sequences $(\ell_j), \: (m_j)$ satisfying (\ref{eq:1.4}) exist.
 
 On the other hand, Ikawa \cite[Proposition 2.3]{ikawa1990poles} showed that if $\eta_D(s)$ cannot be prolonged as entire function, then $(LB)$ holds for $F_D.$ For obstacles with $C^{\infty}$ boundary some conditions which imply that $\eta_D(s)$ cannot be prolonged as entire function have been established in \cite{Petkov2025cluster}. It is interesting to find conditions leading to   $(LB)$ which are not related to the existence of poles of $\eta_D(s)$. In this paper we study this problem.

 To construct sequences $\{\ell_j\}, \{m_j\}$ satisfying (\ref{eq:1.4}), we must study the distribution of the periods of periodic rays which has independent interest.  Let $\Pi \subset \mathcal P$ be the set of primitive periodic orbits of billiard flow $\varphi_t$  
 and  let $\Pi_{+} \subset \Pi$ (resp. $\Pi_{-} \subset \Pi$) be the set of  periodic rays with even (resp. odd) number of reflexions.
 The counting function of the lengths  satisfies 
\begin{equation} \label{eq:1.5}
\sharp\{\gamma \in \Pi:\: \tau(\gamma) \leqslant x\} \sim \frac{\e^{h x}}{h x} , \quad x \to + \infty,
\end{equation}
with some $h > 0$ (see for instance, \cite[Theorem 6.9] {Parry1990} for weak-mixing suspension symbolic flows and \cite{ikawa1990poles}, \cite{morita1991symbolic} for symbolic models related to billiard flow).
Moreover,  we have the asymptotics (see \cite[Theorem 2]{Giol2010})
\begin{equation} \label{eq:1.6}
\sharp\{\gamma\in \Pi_{\pm} :\: \tau(\gamma)\leqslant x\} \sim \frac{\e^{h x}}{ 2 h x} , \quad x \to + \infty.
\end{equation}
  
 Introduce the {\it length spectrum} 
$\lc = \{\tau(\gamma) : \: \gamma \in \Pi\}$. We say that $\lc$ is {\it exponentially separated} if there exists $\nu > 0$ such that for all $\ell, \: \ell' \in \lc$ we have 
  \begin{equation} \label{eq:1.7}
  |\ell - \ell'|\ \geq e^{-\nu \max\{\ell, \ell'\}} \: {\rm if}\: \ell \neq \ell'.
    \end{equation}  
From Theorem 1.1 below it follows that if $\lc$ is exponentially separated, then the condition $(LB)$ holds. 

We recall some positive and negative results concerning the exponential separation of length spectrum $\lc.$     For compact Riemannian manifolds $M$ with negative curvatures  the metrics for which $\lc$ is not exponentially separated are topologically generic and dense for $C^k, \: k > 3,$ topology (see \cite[Theorem 4.1]{Dolgopyat2016}). On the other hand, Schenck proved in 
   \cite[Theorem 1]{Schenck2020}) that  the set of metrics for which $\lc$ is exponentially separated is dense in $C^k, k \geq 2,$ topology and (\ref{eq:1.7}) holds with  $\nu = \nu_k > 0$ depending of $k$ and the dynamical characteristic. However, $\nu_k \to +\infty$ as $k \to \infty,$ so an approximation with $C^{\infty}$ metrics having exponentially separated length spectrum is an open problem. 

For billiard flow $\varphi_t$  the lengths $\ell \in \lc$ are rationally independent for generic obstacles
(see \cite[Theorem 6.2.3]{petkov2017geometry}). This result implies that generically there are gaps between the lengths of different periodic rays. However the estimates of these gaps and the existence of generic obstacles with exponentially separated $\lc$ seems to be difficult open problem. In contrast to the metric case mentioned above, for  obstacles we may perturb generically only the boundary and  the rays in $\R^d \setminus \bar{D}$  are always union of  linear segments. Consequently, a perturbation of the boundary  is much more restrictive than the perturbations of a metric studied in \cite{Dolgopyat2016} and \cite{Schenck2020}. In section 4 we prove that the periodic orbits in the phase space are exponentially separated. This is an analog of Proposition 2 in \cite{Schenck2020}. This result could be considered as a first step in the analysis of the existence of exponentially small gaps in $\lc$ for generic obstacles.

It is important to remark that in (\ref{eq:1.4}) are involved the  contributions of the iterated rays with periods in the set $\{m\ell:\: \ell \in \lc,\: m \geq 2\}.$ Hence even in the case when  $\lc$ is exponentially separated, for the analysis of $(LB)$ the terms in  (\ref{eq:1.4}) related to these rays must be estimated. In this paper we show that a separation condition concerning a very small subsets of  rays $\gamma \in \Pi_+$  implies $(LB).$ Our main result is the following
\begin{theo} Assume that there exist $\delta > 0,\:  0 < \rho < \min\{1, h^{-1}\},\: c_0 > 5 - \frac{h\rho}{3}$ and a sequence $q_j \nearrow +\infty$ such that
\begin{eqnarray} \label{eq:1.8}
\sharp \big\{ \gamma \in \Pi_{+}:\: q_j - \rho < \tau(\gamma) \leq q_j, \nonumber \\
\:|\tau(\gamma) - \tau(\gamma')| \geq e^{-\delta \max\{\tau(\gamma), \tau(\gamma') \}}, \: \forall \gamma' \in \Pi \setminus \{\gamma\}\big\} \geq \frac{c_0 \rho e^{\frac{hq_j}{3}}}{8q_j}.
\end{eqnarray}
Then the condition $(LB)$ is satisfied for $F_D.$ 
\end{theo}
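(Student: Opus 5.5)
Fix $\beta \ge \alpha_1$ (to be chosen) and for each $j$ consider the window $(q_j-\rho, q_j]$. The plan is to pick $\ell_j = \tau(\gamma_j)$ for a suitable separated ray $\gamma_j\in\Pi_+$ from the set in (\ref{eq:1.8}), and to take $m_j$ with $e^{\beta\ell_j}\le m_j\le e^{2\beta\ell_j}$ so large that $\psi_j$, which is supported in $|t-\ell_j|\le m_j^{-1}$, sees only $\gamma_j$ among primitive rays. Concretely, if $\beta>\delta$ then $m_j^{-1}\le e^{-\beta \ell_j}< \tfrac12 e^{-\delta\ell_j}$ for large $j$. The separation in (\ref{eq:1.8}) then guarantees that no other primitive $\gamma'$ with $\tau(\gamma')$ near $\ell_j$ meets ${\rm supp}\,\psi_j$. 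Moreover, $\psi_j(\tau(\gamma_j))=1$. The primitive contribution to $\langle \mathcal F_D,\psi_j\rangle$ is therefore exactly $\tau(\gamma_j)|\det(\mathrm{Id}-P_{\gamma_j})|^{-1/2}$, with positive sign since $m(\gamma_j)$ is even. By (\ref{eq:1.3}) it is at least $\ell_j e^{-\mu_2\ell_j/2}$.

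The remaining difficulty is the iterated rays $\gamma=\gamma_0^k$, $k\ge2$, with $k\tau(\gamma_0)\in(\ell_j-m_j^{-1},\ell_j+m_j^{-1})$. Their lengths $\tau(\gamma_0)\le \ell_j/2$ lie within $m_j^{-1}/k$ of $\ell_j/k$. Each such term is bounded by $\tau(\gamma_0)e^{-\mu_1 k\tau(\gamma_0)/2}\lesssim \ell_j e^{-\mu_1\ell_j/2}$, which may be larger than the main term because $\mu_1<\mu_2$. So I cannot simply dominate; I must instead \emph{avoid} them. The counting strategy is as follows. Call $\gamma\in\Pi_+$ in the window \emph{bad} if there is $k\ge 2$ and $\gamma_0\in\Pi$ with $|k\tau(\gamma_0)-\tau(\gamma)|<e^{-\beta\tau(\gamma)}$. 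For fixed $k$, the number of candidate $\gamma_0$ is at most the number of primitive rays with length $\le q_j/k$, which by (\ref{eq:1.5}) is $O(e^{hq_j/k}/(q_j/k))$. Summing over $2\le k\le q_j/d_0$, the $k=2$ term dominates: at most $C e^{hq_j/2}/q_j$ bad rays.

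This is too many compared to $c_0\rho e^{hq_j/3}/(8q_j)$, so the count has to be refined. Each $\gamma_0$ with $k\ge2$ "kills" only those $\gamma$ whose length lies in an interval of size $2e^{-\beta q_j+\beta\rho}$ around $k\tau(\gamma_0)$. The separated rays are $e^{-\delta q_j}$-apart, so each such interval contains at most one good separated ray once $\beta>\delta$. This gives no improvement directly. I therefore expect the proof to pass to a smaller subwindow instead: pigeonhole the $\ge c_0\rho e^{hq_j/3}/(8q_j)$ separated rays into subintervals and compare with the iterates. Alternatively, and more likely what the constants $c_0>5-h\rho/3$ and the exponent $h/3$ encode, one uses (\ref{eq:1.6}) together with the fact that iterates with $k\ge 3$ number $O(e^{hq_j/3})$. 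The $k=2$ iterates are handled separately, via parity: $\gamma_0^2$ always has even reflections, so one estimates them in the sum as a group rather than excluding them.

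The main obstacle is precisely this bookkeeping of iterated rays, especially $k=2$ and $k=3$, where the count $e^{hq_j/3}$ matches the hypothesis. I would first try the following. Choose $\ell_j$ among the separated rays so that no $\gamma_0$ with $k\ge3$ lies within $e^{-\beta \ell_j}$; the hypothesis with $c_0>5-h\rho/3$ should exceed the $k\ge3$ count coming from (\ref{eq:1.5}) in the window. For $k=2$, if some $\gamma_0^2$ falls in the support, note that its contribution also has a positive sign ($m(\gamma_0^2)$ is even). All terms in ${\rm supp}\,\psi_j$ would then be nonnegative, and no cancellation can occur. This yields $\langle\mathcal F_D,\psi_j\rangle\ge \ell_j e^{-\mu_2\ell_j/2}$, i.e. (\ref{eq:1.4}) with $\alpha_0=\mu_2/2$ and $\alpha_1>\delta$. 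Only iterates with odd $k\ge3$ of odd rays can carry a negative sign, and those are what the counting must exclude.
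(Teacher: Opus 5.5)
Your final plan is the paper's. You pick a separated $\gamma_j\in\Pi_+$ in the window and note that every ray with an even number of reflections contributes a nonnegative term to $\langle \mathcal F_{\mathrm D},\psi_j\rangle$; this disposes of all $k=2$ iterates and of all iterates of rays in $\Pi_+$. You then use counting plus pigeonhole to keep every length $(2k+1)\tau(\gamma_0)$, $\gamma_0\in\Pi_-$, $k\ge 1$, out of ${\rm supp}\,\psi_j$. Your remark that for $\beta>\delta$ each such length lies within $m_j^{-1}$ of at most one separated length is exactly the injectivity the pigeonhole needs, and it is a bit cleaner than the paper's choice $\beta=\delta$.

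The gap is the decisive counting step. You only assert it (``should exceed''), and in the form you state it, it is false. Suppose you exclude all iterates with $k\ge 3$ of all primitive rays, counted by (\ref{eq:1.5}). Then the $k=3$ term alone satisfies
\[
\sharp\Bigl\{\gamma_0\in\Pi:\ \tfrac{q_j-\rho}{3}<\tau(\gamma_0)\le\tfrac{q_j}{3}\Bigr\}\sim\frac{24}{h\rho}\bigl(1-e^{-h\rho/3}\bigr)\,\frac{\rho e^{hq_j/3}}{8q_j},
\]
and the prefactor lies between about $6.8$ and $8$ for $0<h\rho<1$. So a constant such as $c_0=6$, which satisfies $c_0>5-\frac{h\rho}{3}$, does not beat this count, and the pigeonhole produces no free separated ray. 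Restricting to odd $k$ does not help by itself, since $k=3$ dominates.

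What closes the argument is your last sentence together with an input you never invoke: the parity equidistribution (\ref{eq:1.6}) of Giol. Only $\gamma_0\in\Pi_-$ need be counted, and these are asymptotically half of $\Pi$. This gives Proposition 2.1, $N_{odd}(q)\sim\frac{3e^{hq/3}}{2hq}$, where the terms with $2k+1\ge 5$ are $O(e^{hq/5})$. It then gives Lemma 3.2, $N_{odd}(q-\rho,q)\le(5-\frac{h\rho}{3})(1+\epsilon)\frac{\rho e^{hq/3}}{8q}$, which is exactly where the threshold $5-\frac{h\rho}{3}$ in the hypothesis comes from.

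You must also count odd iterates in a window enlarged by the support radius, for example $(q_j-\rho-r_j,\,q_j+r_j]$ with $r_j=e^{-\beta(q_j-\rho)}\ge m_j^{-1}$ for $m_j=e^{\beta\ell_j}$. Otherwise an odd iterate just outside $(q_j-\rho,q_j]$ can still meet ${\rm supp}\,\psi_j$ when $\ell_j$ is near an endpoint. As in the paper, this enlargement changes the bound only by a factor tending to $1$. With these two points supplied, your argument becomes the paper's proof and gives $\langle\mathcal F_{\mathrm D},\psi_j\rangle\ge \ell_j e^{-\mu_2\ell_j/2}$.
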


In Lemma 3.1 we prove that for every small $\ep > 0$ and $q_j \geq C(\ep)$ we have the lower bound
$$\sharp \big\{ \gamma \in \Pi_{+}:\: q_j - \rho < \tau(\gamma) \leq q_j\big\}  \geq (1 - \ep) \frac{\rho e^{hq_j}}{8q_j},$$
while the separation assumption in (\ref{eq:1.8}) concerns only $\mathcal O \Bigl(\frac{ \rho e^{\frac{hq_j}{3}}}{8q_j}\Bigr)$ rays. For this reason we say that a very small subsets of $\{\gamma \in \Pi_{+}:\: q_j - \rho < \tau(\gamma) \leq q_j\}$ must be exponentially separated. Moreover, in Theorem 1.1 there is not separation condition for the lengths of $\gamma \in \Pi_{-}.$

The paper is organised as follows. In Section 2 we obtain upper and lower bounds of the number of iterated rays with odd and even number of reflexions. These bounds have independent interest. In particular, we show that the number of the iterated periodic rays with lengths in $[d_0/2, q]$ is less than the number of primitive periodic rays with lengths in the same interval. In Section 3 one examines the number of lengths of periodic rays in small intervals $]q_j - \rho, q_j]$  and we prove Theorem 1.1. The exponential separation of periodic rays in phase space is studied in Section 4. The idea of the proof is based on the fact  that different periodic rays follows different configurations  (see \cite[Corollary 2.2.4]{petkov2017geometry}). The analysis is technical since we must study  some rays having tangent segments.  Finally, in Section 5 we formulate an open problem for generic obstacles.

\section{Estimation of the number of iterated rays}

Clearly,  $d_0 \leq \tau(\gamma),\: \forall \gamma \in \mathcal P.$
 Given $ q \gg 1,$ introduce the counting functions of the periods of iterated rays
\[N_{odd}(q) = \sharp \{ \gamma \in \Pi_{-}:\:(2k +1) \tau(\gamma) \leq q,\: k \in \N,\: k \geq 1\}, \]
\[N_{even}(q) = \sharp \{ \gamma \in \Pi:\: 2k \tau(\gamma) \leq q, \: k \in \N,\:k \geq 1\}. \]
Therefore for $q \geq 4 d_0$
\begin{equation} \label{eq:2.1}
(2k + 1) d_0 \leq (2k + 1) \tau(\gamma) \leq q
\end{equation} 
 implies $k \leq [\frac{q}{2 d_0} - 1/2] = p_q, \: p_q \geq 1$. Thus in the definition of $N_{odd}(q)$ one has $1 \leq k \leq p_q,$ while  in $N_{even}(q)$ we have $1 \leq k \leq [\frac{q}{2d_0}].$
If $\gamma \in \Pi_{-},$ the number of reflexions $m(\gamma)$ of $\gamma$ is odd and the iterated ray
\[\gamma_{2k + 1} =  \underbrace{\gamma \cup \gamma \cup...\cup \gamma}_{\text (2k + 1)\: times}\]
with length $(2k + 1) \tau(\gamma)$ will have odd reflexions, too. Hence  the contribution of $\gamma_{2k+ 1}$  in (\ref{eq:1.2}) contains a negative factor $(- 1)^{(2k + 1)m(\gamma)}.$

\begin{prop} Let $0 < \ep < 1/4$ be fixed. Then there exists $B_{\ep} \gg 1$ such that for $q \geq B_{\ep}$ we have
\begin{equation} \label{eq:2.2}
(1-\ep) \frac{3 e^{\frac{hq}{3}}}{2hd} < N_{odd}(q) \leq (1 + \ep)\frac{3e^{\frac{hq}{3}}}{2hq},
\end{equation}
\begin{equation} \label{eq:2.3}
(1-\ep) \frac{2 e^{\frac{hq}{2}}}{hd} < N_{even}(q) \leq (1 + \ep)\frac{2e^{\frac{hq}{2}}}{hq}.
\end{equation}
\end{prop}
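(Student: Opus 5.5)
The plan is to reduce both counting functions to sums of the prime orbit counting functions (\ref{eq:1.5}) and (\ref{eq:1.6}) evaluated at $q/(2k+1)$, resp.\ $q/(2k)$. Then show that the term $k=1$ carries the whole asymptotics and all other terms are exponentially smaller. I read $N_{odd}(q)$ and $N_{even}(q)$ as counting the iterated rays, i.e.\ the pairs $(\gamma,k)$. I also read the denominators $hd$ on the left of (\ref{eq:2.2}) and (\ref{eq:2.3}) as $hq$ (an apparent misprint). Both bounds then come out as $(1\pm\ep)$ times the same main term.

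\textbf{Step 1 (decomposition).} By (\ref{eq:2.1}), in $N_{odd}(q)$ we have $1\le k\le p_q$, and in $N_{even}(q)$ we have $1\le k\le [q/(2d_0)]$. Writing $\pi_\pm(x)=\sharp\{\gamma\in\Pi_\pm:\tau(\gamma)\le x\}$ and $\pi(x)=\sharp\{\gamma\in\Pi:\tau(\gamma)\le x\}$, this gives
\[
N_{odd}(q)=\sum_{k=1}^{p_q}\pi_-\Big(\frac{q}{2k+1}\Big),\qquad N_{even}(q)=\sum_{k=1}^{[q/(2d_0)]}\pi\Big(\frac{q}{2k}\Big).
\]

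\textbf{Step 2 (main term).} By (\ref{eq:1.6}) applied with $x=q/3$,
\[
\pi_-(q/3)\sim \frac{e^{hq/3}}{2h(q/3)}=\frac{3e^{hq/3}}{2hq}.
\]
Similarly, (\ref{eq:1.5}) with $x=q/2$ gives
\[
\pi(q/2)\sim\frac{2e^{hq/2}}{hq}.
\]
Hence for $q\ge B_\ep$ the $k=1$ term lies between $(1-\ep/2)$ and $(1+\ep/2)$ times the claimed main term. Since all terms are nonnegative, the lower bounds in (\ref{eq:2.2}) and (\ref{eq:2.3}) follow at once by keeping only $k=1$. Indeed, even strict inequality with $1-\ep$ holds.

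\textbf{Step 3 (tail).} From (\ref{eq:1.5}), and since $\pi(x)=0$ for $x<d_0$ while $\pi$ is finite on compact sets, there is a uniform constant $C$ with $\pi_\pm(x)\le\pi(x)\le Ce^{hx}$ for all $x>0$. For $k\ge2$ each term of the odd sum is at most $Ce^{hq/5}$, and the number of terms is at most $q/(2d_0)$. So
\[
\sum_{k\ge2}\le \frac{Cq}{2d_0}\,e^{hq/5}.
\]
The ratio of this to $e^{hq/3}/q$ is of order $q^2e^{-2hq/15}$, which tends to $0$. The even tail is bounded in the same way by $\frac{Cq}{2d_0}e^{hq/4}=o\big(e^{hq/2}/q\big)$. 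Choosing $B_\ep$ large enough that each tail is below $\frac{\ep}{2}$ times the main term then gives the upper bounds. The only point needing care is this uniform bound $\pi(x)\le Ce^{hx}$ for \emph{all} $x$, not just asymptotically, because many $k$ make $q/(2k)$ small. That uniformity is what (\ref{eq:1.5}) together with finiteness of $\pi$ on bounded intervals supplies; the rest is routine.
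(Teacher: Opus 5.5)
Your proof is correct and is essentially the paper's argument: write $N_{odd}(q)=\sum_{k}\sharp\{\gamma\in\Pi_-:\tau(\gamma)\le q/(2k+1)\}$, keep only the $k=1$ term for the lower bound (and yes, $hd$ is a misprint for $hq$), and show that the $k\ge 2$ terms are $o(e^{hq/3}/q)$. The difference is cosmetic: you use one global bound $\pi(x)\le Ce^{hx}$, whereas the paper splits the tail into $J_1(q)$ (arguments $\ge C_\epsilon$, controlled by the asymptotics) and $J_2(q)$ (arguments $<C_\epsilon$, each bounded by a constant $A_\epsilon$), which amounts to the same thing; you also write out the even case, which the paper omits.
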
 
\begin{proof}
Write
\[N_{odd}(q) = \sum_{k = 1}^{p_q} \sharp \big\{ \gamma \in \Pi_{-}: \:\tau(\gamma) \leq \frac{q}{2k + 1}\big \}.\]
Applying (\ref{eq:1.6}), there exists $C_{\ep} > d_0 + 1$ such that for $x \geq C_{\ep}$ we have
\begin{equation} \label{eq:2.4}
( 1- \frac{\ep}{2})\frac{e^{hx}}{2hx} \leq  \sharp\{\gamma \in \Pi_{\pm}:\: \tau(\gamma) \leq x\} \leq ( 1 + \frac{\ep}{2}) \frac{e^{hx}}{2hx}.
\end{equation}
We fix $C_{\ep}$ and choose $q \geq B_{\ep} > \max\{5C_{\ep}, 4d_0\}$. We have the sum 
 \[N_{odd}(q) = \sum_{[\frac{q}{C_{\ep}}] \geq 2k + 1 \geq 3}\sharp \big\{ \gamma \in \Pi_{-}: \:\tau(\gamma) \leq \frac{q}{2k + 1}\big\} \] 
  \[+  \sum_{ [\frac{q}{C_{\ep}}] < 2k + 1 \leq \frac{q}{ d_0}}\sharp \big\{ \gamma \in \Pi_{-}: \:\tau(\gamma) \leq \frac{q}{2k + 1}\big \} = J_1(q) + J_2(q). \] 
 There exists a constant  $A_{\ep} > 1$ such that
 \[\sharp \big\{ \gamma \in \Pi_{-}: \:\tau(\gamma) \leq C_{\ep}\big \} \leq A_{\ep}.\]
 According to (\ref{eq:2.1}) and (\ref{eq:2.4}), one deduces
 \[J_1(q) \leq  ( 1 +\frac{\ep}{2}) \frac{3 e^{\frac{hq}{3}}}{h}\Bigl (\frac{1}{2q} + \frac{1}{3}\sum_{k = 2}^{m(\ep, q)} \frac{e^{\frac{hq}{2k + 1} - \frac{hq}{3}}}{d_0} \Bigr)\]
 \[\leq ( 1 +\frac{\ep}{2}) \frac{3 e^{\frac{hq}{3}}}{h}\Bigl (\frac{1}{2q} + \frac{ m(\ep, q)- 1}{3 d_0} e^{-\frac{2 hq}{15}} \Bigl),\]
where 
\[ m(\ep, q) = \begin{cases}  [\frac{1}{2}[\frac{q}{C_{\ep}}]- 1/2]\: {\rm if}\: \frac{1}{2}[\frac{q}{C_{\ep}}]- 1/2 \notin \N,\\
                                              \frac{1}{2}[\frac{q}{C_{\ep}}] - 1/2 \: {\rm if}\: \frac{1}{2}[\frac{q}{C_{\ep}}]- 1/2 \in \N. \end{cases} \]
  Notice that $q \geq 5C_{\ep}$ implies $m(\ep, q) \geq 2.$                                            
  Since in $J_2(q)$ one has $2k + 1\geq  [ \frac{q}{C_{\ep}}] + 1 >  \frac{q}{C_{\ep}}, $ we obtain
  \[J_2(q) \leq (p_q - m(\ep, q)) A_{\ep}.\]
    Increasing $B_{\ep},$ if it is necessary, one arranges for $q \geq B_{\ep}$ the inequalities
\[ \frac{1}{2q} + \frac{ m(\ep, q) -1}{3 d_0} e^{-\frac{2 hq}{15}}  \leq \frac{1}{2q} + \frac{\ep}{8q(1+\ep/2)} = \frac{4+ 3\ep}{8q(1 + \ep/2) },\]
\[ (p_q - m(\ep, q)) A_{\ep} \leq  \frac{3\ep e^{\frac{hq}{3}}}{8hq}.\]
Combining the above estimates for $J_k(q), \: k = 1,2,$ we conclude that
\[ N_{odd}(q) \leq \frac{(1 +\ep)3e^{\frac{hq}{3}}}{2hq}.\]
To obtain the left hand side part of (\ref{eq:2.2}), we apply (\ref{eq:2.4}) and taking into account only the term 
$$\sharp\{\gamma \in \Pi_{-}: \tau(\gamma) \leq q/3 \},$$
 one has
\[(1- \ep) \frac{3 e^{\frac{hq}{3}}}{2 hq} < (1- \frac{\ep}{2}) \frac{3 e^{\frac{hq}{3}}}{2 hq} \leq N_{odd}(q).\]
For the proof of (\ref{eq:2.3}) we apply a similar argument and we omit the details.
\end{proof}

\section{Length spectrum in small intervals}

To estimate the number of periodic rays in $\Pi_{+}$ with lengths in a interval $(q - \rho, q],$ we need the following
\begin{lemm} Let $0 < \rho < \min\{1, h^{-1}\}$ and let  $0 < \frac{2 \ep}{1- \ep} \leq \frac{\rho h}{4}.$ Then for $q \geq C(\ep)$ we have
\begin{equation} \label{eq:3.1}
( 1- \ep)\frac{\rho e^{hq}}{8q} \leq \sharp \{ \gamma \in \Pi_{+}: \: q - \rho < \tau(\gamma) \leq q \} \leq  (5- h \rho)( 1+ \ep)\frac{\rho e^{hq}}{8q}.
\end{equation}
\end{lemm}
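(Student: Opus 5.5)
The count in the window is a difference of two values of the counting function:
\[
\sharp \{ \gamma \in \Pi_{+}: q - \rho < \tau(\gamma) \leq q \} = \pi_+(q) - \pi_+(q-\rho), \qquad \pi_+(x) = \sharp\{\gamma \in \Pi_+:\: \tau(\gamma) \leq x\}.
\]
I will plug the two-sided bound (\ref{eq:2.4}) into each term, with $\ep$ in place of $\ep/2$. This requires $q - \rho \geq C_{\ep}$, which is how $C(\ep)$ is chosen. Then
\[
(1-\ep)\frac{e^{hq}}{2hq} - (1+\ep)\frac{e^{h(q-\rho)}}{2h(q-\rho)} \leq \pi_+(q) - \pi_+(q-\rho) \leq (1+\ep)\frac{e^{hq}}{2hq} - (1-\ep)\frac{e^{h(q-\rho)}}{2h(q-\rho)}.
\]
Using $\frac{1}{q-\rho}\geq \frac1q$ in the upper bound, and $\frac{1}{q-\rho}\leq \frac{1+\ep'}{q}$ for large $q$ in the lower bound, both sides reduce to $\frac{e^{hq}}{2hq}$ times an expression in $x = h\rho \in (0,1)$.

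For the upper bound, the bracket is $(1+\ep) - (1-\ep)e^{-x} = (1+\ep)(1-e^{-x}) + 2\ep e^{-x}$. I will bound $1 - e^{-x} \leq x - x^2/2 + x^3/6$ and use $x<1$. The hypothesis $\frac{2\ep}{1-\ep}\leq \frac{x}{4}$ gives $2\ep \leq (1+\ep)\frac{x}{4}$. Together these give a bracket of at most $(1+\ep)\bigl(x - \tfrac{x^2}{2}+\tfrac{x^2}{6} + \tfrac{x}{4}\bigr)$, which is crudely at most $(1+\ep)\frac{x}{4}(5 - x)$. The target $(5-h\rho)(1+\ep)\frac{\rho e^{hq}}{8q}$ equals $\frac{e^{hq}}{2hq}\cdot(1+\ep)\frac{x(5-x)}{4}$, so it suffices to check $1-e^{-x} + \frac{x}{4} \leq \frac{x(5-x)}{4}$, i.e. $1 - e^{-x} \leq x - x^2/4$. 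This holds on $(0,1)$ since $1-e^{-x}\le x - x^2/2 + x^3/6$.

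For the lower bound, the bracket is $(1-\ep) - (1+\ep)(1+\ep')e^{-x}$. I will write it as $(1-\ep)(1-e^{-x}) - (2\ep+\ep'')e^{-x}$. Then I use $1-e^{-x} \geq x - x^2/2 \geq x/2$ for $x<1$, and $2\ep \leq (1-\ep)x/4$ from the hypothesis. This gives at least $(1-\ep)(x/2 - x/4) - \ep'' = (1-\ep)\frac{x}{4} - \ep''$, which is $(1-\ep)\frac{\rho e^{hq}}{8q}$ after multiplying by $\frac{e^{hq}}{2hq}$. The leftover $\ep'' = O(\rho/q)$ from replacing $q-\rho$ by $q$ is a problem: it must be absorbed. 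To do this I will run (\ref{eq:2.4}) with a slightly smaller $\ep_1<\ep$, so that there is slack, and take $C(\ep)$ large enough that the $O(1/q)$ term fits into that slack.

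The main obstacle is this constant bookkeeping in the lower bound, since the margin is exactly the $\frac{2\ep}{1-\ep}\le\frac{\rho h}{4}$ hypothesis. If needed, I would instead use the sharper inequality $1-e^{-x}\ge x-x^2/2$ with $x<1$ to create room, or shrink the constant in (\ref{eq:2.4}).
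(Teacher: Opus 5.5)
Your proposal is correct and follows the paper's route. Both arguments write the window count as $\pi_+(q)-\pi_+(q-\rho)$, insert the $(1\pm\ep)$ bounds from (\ref{eq:1.6}), and finish with elementary inequalities in $x=h\rho$.

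The differences are only in bookkeeping. In the lower bound, the paper absorbs the $\rho/(q-\rho)$ term by showing $\frac{(1+\ep)q}{(1-\ep)(q-\rho)}<e^{h\rho/4}$ and then using $1-e^{-3h\rho/4}\ge h\rho/4$. Either of your fixes works there: a smaller $\ep_1$, or the fixed positive slack $(1-\ep)x(1-x)/2$ coming from $1-e^{-x}\ge x-x^2/2$ with $x<1$. In the upper bound, you reach the same $\frac{x(5-x)}{4}$ through a third-order Taylor bound, where the paper uses $e^{-x}\ge 1-x$ together with the product $(1-\frac{h\rho}{4})(1-h\rho)$.
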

\begin{proof} An application of  (\ref{eq:1.6}) with $q \geq C(\ep)$ yields
\[\sharp \{ \gamma \in \Pi_{+}: \: q - \rho < \tau(\gamma) \leq q \} \geq (1- \ep) \frac{e^{hq}}{2 hq} - ( 1 + \ep)\frac{e^{h(q- \rho)}}{2h (q - \rho)}\]
\[= (1- \ep) \frac{e^{hq}}{2hq e^{h\rho}}\Bigl( e^{h\rho} - \frac{(1 + \ep) q}{(1- \ep)(q - \rho)}\Bigr).\]
Next, choosing $C(\ep)$ large enough, we obtain 
\[\frac{(1 + \ep) q}{(1- \ep)(q - \rho)} = \Bigl(1 + \frac{2 \ep}{1 - \ep} \Bigr)\Bigl(1 +  \frac{\rho}{q - \rho}\Bigr) \]
\[ \leq 1 + \frac{\rho h}{4} + \frac{\rho}{q - \rho} \Bigl( 1 + \frac{2 \ep}{1 - \ep} \Bigr) \leq 1 + \frac{\rho h}{4} + \frac{\rho^2 h^2}{32}< e^{\frac{h\rho}{4}}.\]
This implies
\[ e^{h\rho} - \frac{(1 + \ep) q}{(1- \ep)(q - \rho)} > e^{h \rho}( 1  - e^{-\frac{3h \rho}{4}}).\]
On the other hand, we have the inequality  $f(y) =1 - e^{-3y} - y \geq 0$ for $0 \leq y \leq \frac{\log 3}{3}$ because
\[ f'(y) \geq 0 \: {\rm for} \: 0 \leq y \leq \frac{\log 3}{3}.\]
Therefore $\rho < \frac{1}{h} < \frac{4\log 3}{3 h} $ yields $\frac{h \rho}{4} < \frac{\log 3}{3}$, hence
\[1  - e^{-\frac{3h \rho}{4}} \geq \frac{h \rho}{4},\]
and we obtain the left hand side of (\ref{eq:3.1}).

         To establish  the upper bound in (\ref{eq:3.1}), notice that for $q \geq C(\ep)$ one has
  \[  \sharp \{ \gamma \in \Pi_{+}: \: q - \rho \leq \tau(\gamma) \leq q \}  \leq (1 + \ep) \frac{e^{hq}}{2 hq} - ( 1- \ep)\frac{e^{h(q - \rho)}}{2h(q - \rho)}\] 
  \[ = ( 1+ \ep) \frac{e^{hq}}{2h q}\Bigl(1 - \Bigl( 1 - \frac{2\ep}{1 + \ep}\Bigr) \Bigl(1 + \frac{\rho}{q - \rho}\Bigr) e^{-h \rho}\Bigr).\]  
  Since $e^{- x} \geq 1 - x$ for $x \geq 0$, and $\frac{2\ep}{1 + \ep} < \frac{h \rho}{4},$ we obtain
  \[ 1 - \Bigl( 1 - \frac{2\ep}{1 + \ep}\Bigr) \Bigl(1 + \frac{\rho}{q - \rho}\Bigr) e^{-h \rho} \leq 1 -  
   ( 1 - \frac{h\rho}{4})  (1 - h\rho)\]  
  \[ =  h \rho\Bigl( \frac{5 - h \rho}{4}\Bigr).\]
 This completes the proof.   \end{proof}
  
  It is important to note that in the estimates (\ref{eq:3.1}) one has as factor the length of the interval $[q-\rho, q].$ Introduce
  \[N_{odd} (q-\rho, q) = N_{odd}(q) - N_{odd}(q- \rho).\]
 Clearly, $h \rho < 1$ implies $h\rho/ 3 < 1$. Exploiting  (\ref{eq:2.2}), we obtain the following
 \begin{lemm} Under the assumptions of Lemma $3.1$ for $q \geq C_{\ep}$ we have
  \begin{equation} \label{eq:3.2}
  ( 1- \ep)\frac{\rho e^{\frac{hq}{3}}}{8q} \leq N_{odd}(q- \rho, q) \leq (5- \frac{h \rho}{3})( 1+ \ep)\frac{\rho e^{\frac{hq}{3}}}{8q}.  
  \end{equation}
 \end{lemm}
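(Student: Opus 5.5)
The plan is to repeat the proof of Lemma 3.1 verbatim, but with the two-sided asymptotics (\ref{eq:2.2}) for $N_{odd}$ in place of (\ref{eq:1.6}), and with $h$ replaced by $h/3$ throughout. First I would record the structure of (\ref{eq:2.2}): for $q\geq B_{\ep}$,
\[
(1-\ep)\frac{3e^{hq/3}}{2hq}\leq N_{odd}(q)\leq (1+\ep)\frac{3e^{hq/3}}{2hq}.
\]
The denominator $2hd$ printed on the left of (\ref{eq:2.2}) is a typo for $2hq$; the lower bound in the proof of Proposition 2.1 indeed gives $2hq$. Writing $h'=h/3$, this becomes
\[
(1\pm\ep)\frac{e^{h'q}}{2h'q},
\]
which is exactly the form of (\ref{eq:2.4}) with $h'$ in place of $h$. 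I would take $C_{\ep}\geq B_{\ep}+1$ so that both $q$ and $q-\rho$ lie in the range of validity.

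For the lower bound I write $N_{odd}(q-\rho,q)\geq (1-\ep)\frac{e^{h'q}}{2h'q}-(1+\ep)\frac{e^{h'(q-\rho)}}{2h'(q-\rho)}$ and factor exactly as in Lemma 3.1. The hypotheses transfer to $h'$:
\begin{itemize}
\item $\rho<\min\{1,h^{-1}\}$ implies $\rho<h'^{-1}$;
\item $\frac{2\ep}{1-\ep}\leq \frac{\rho h}{4}$ must be replaced by $\frac{2\ep}{1-\ep}\leq\frac{\rho h'}{4}=\frac{\rho h}{12}$.
\end{itemize}
This is a shrinking of $\ep$, which is harmless since $\ep$ is arbitrary small. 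Alternatively I would read ``under the assumptions of Lemma 3.1'' as allowing $\ep$ to be adjusted, and state this explicitly. Then the chain $\frac{(1+\ep)q}{(1-\ep)(q-\rho)}<e^{h'\rho/4}$, the inequality $1-e^{-3y}\geq y$ for $y=h'\rho/4<\frac{\log 3}{3}$, and the factor $e^{h'\rho}(1-e^{-3h'\rho/4})$ give
\[
N_{odd}(q-\rho,q)\geq (1-\ep)\frac{e^{h'q}}{2h'q}\cdot\frac{h'\rho}{4}=(1-\ep)\frac{\rho e^{hq/3}}{8q}.
\]
The cancellation of $h'$ here is what makes the constant $1/8$ independent of $h$.

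For the upper bound I take $(1+\ep)\frac{e^{h'q}}{2h'q}-(1-\ep)\frac{e^{h'(q-\rho)}}{2h'(q-\rho)}$, factor as before, and use $e^{-x}\geq 1-x$ together with $\frac{2\ep}{1+\ep}<\frac{h'\rho}{4}$. This bounds the bracket by $h'\rho\frac{5-h'\rho}{4}$, which yields $(5-\frac{h\rho}{3})(1+\ep)\frac{\rho e^{hq/3}}{8q}$.

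The only genuine obstacle is bookkeeping rather than analysis. The relation between $\ep$ and $\rho h$ needs the factor $1/3$, and the ranges $q\geq B_{\ep}$ versus $q\geq C_{\ep}$ must be reconciled, including for $q-\rho$. Beyond fixing the typo and these constants, the computation is identical to Lemma 3.1.
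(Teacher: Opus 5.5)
Your proposal is correct and follows the paper's own route: apply (\ref{eq:2.2}) at $q$ and $q-\rho$, with $2hq$ in place of the misprinted $2hd$, and rerun the proof of Lemma 3.1 with $h$ replaced by $h/3$. Your bookkeeping remarks fill in what the paper omits with ``we omit the details'': the need to tighten the condition on $\epsilon$ to $\frac{2\epsilon}{1-\epsilon}\leq\frac{\rho h}{12}$ is genuinely required for the constant $5-\frac{h\rho}{3}$ in the upper bound, and $C_\epsilon\geq B_\epsilon+1$ is needed so that both $q$ and $q-\rho$ lie in the range of (\ref{eq:2.2}).
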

 We apply (\ref{eq:2.2}) and get
 \[ N_{odd}(q)- N_{odd}(q - \rho) \leq (1 + \ep) \frac{3e^{\frac{hq}{3}}}{2 hq} - ( 1 - \ep)\frac{3e^{\frac{h}{3}(q- \rho)}}{2h (q - \rho)}.\]
 Next the proof is a repetition of that of Lemma 3.1 and we omit the details.
 
{ \it Proof of Theorem $1.1$.}  First we choose $0 < \ep < 1$ small enough to arrange $c_0 > \frac{1}{3}( 15- h \rho + \ep)(1 + \ep), \: \frac{2\ep}{1- \ep} < \frac{rh}{4}.$ Fix $\ep$ and consider the interval 
\[(q_j - \rho - e^{-\delta q_j}, q_j + e^{-\delta q_j}] = (p_j- \rho_j,   p_j]\]
with $p_j = q_j + e^{-\delta q_j}$ and $\rho_j =  \rho + 2 e^{-\delta q_j}.$
Taking $q_j$ large enough, one gets $\rho_j < \min\{1, h^{-1}\}.$ We apply the upper bound in (\ref{eq:3.2}) for $N_{odd}(p_j - \rho_j, p_j)$ with  $q_j \geq C(\ep)$ and deduce
\begin{equation} \label{eq:3.3}
N_{odd}(p_j - \rho_j,  p_j) \leq \frac{15 - h \rho_j}{3}( 1+ \ep)\frac{\rho_j e^{\frac{h p_j}{3}}}{8 p_j} .
\end{equation}
We claim that for $q_j \geq m(\ep) \geq C(\ep)$ large we have
\begin{equation} \label{eq:3.4}
(15 - h \rho_j)\frac{\rho_j e^{\frac{h}{3} e^{-\delta q_j}}}{ p_j} < (15 - h \rho + \ep)  \frac{\rho}{q_j}.
\end{equation} 
This inequality is equivalent to
\[\Bigl( 1 - \frac{e^{-\delta q_j}}{p_j}\Bigr)\Bigl(1 + \frac{2 e^{-\delta q_j}}{\rho}\Bigr) e^{\frac{h}{3} e^{-\delta q_j}} < 1 + \frac{2h e^{-\delta q_j} + \ep}{15 - h \rho_j}. \]
For $q_j \to +\infty$ the left hand side of the above inequality goes to 1, so for large $q_j$ it will be less than $1 + \frac{\ep}{15- h \rho} < 1 + \frac{\ep}{15- h \rho_j}.$ This proves the claim. Consequently, for $q_j \geq m(\ep)$ the estimate (\ref{eq:3.4}) implies
\[N_{odd}(p_j - \rho_j,  p_j) \leq \frac{1}{3}(15 - h\rho + \ep)( 1+ \ep)\frac{\rho e^{\frac{h q_j}{3}}}{8 q_j} .\]

 Increasing $m(\ep)$  and taking into account (\ref{eq:1.8}), for $q_j \geq m(\ep)$ we obtain
\[\sharp \big\{ \gamma \in \Pi_{+}:\: q_j - \rho < \tau(\gamma) \leq q_j, \:|\tau(\gamma) - \tau(\gamma')| \geq e^{-\delta \max\{\tau(\gamma), \tau(\gamma') \}}, \: \forall \gamma' \in \Pi \setminus \{\gamma\}\big\} \]
\[ \geq  \frac{c_0 \rho e^{\frac{hq_j}{3}}}{8q_j} > \frac{1}{3}( 15- h\rho + \ep)( 1+ \ep) \frac{\rho e^{\frac{hq_j}{3}}}{8q_j} \geq N_{odd}(p_j - \rho_j,  p_j).\]
This means that the number of rays $\gamma \in \Pi_{+}$ with $ q_j - \rho < \tau(\gamma) \leq q_j$ such that the intervals 
$$J_{\delta, j}(\gamma) =(\tau(\gamma) - e^{-\delta q_j}, \tau(\gamma) + e^{-\delta q_j})$$ contain only one $\tau(\gamma)$ with $\gamma \in \Pi$ is greater than $N_{odd}(p_j- \rho_j, p_j).$ Hence there exists $\gamma_j \in \Pi_{+}$ with $q_j - \rho < \tau(\gamma_j) \leq q_j$ such that $J_{\delta, j}(\gamma_j)$
does not contain the lengths of  periodic rays $\gamma' \in \mathcal P \setminus \Pi$ having odd number of reflexions. On the other hand, some lengths of iterated rays with even number of reflexions could be in the interval $J_{\delta, j}(\gamma_j).$

We choose $\ell_j = \tau(\gamma_j),\:\beta = \delta,\: m_j = e^{\delta \ell_j}.$ Then in the interval $L_j =(\ell_j -m_j^{-1}, \ell_j + m_j^{-1})$ we have only lengths of periodic rays with even number of reflections and $\psi_j(\ell_j) = 1$. By using (\ref{eq:1.3}), we conclude that
$$\langle F_D, \psi_j\rangle = \sum_{\tau(\gamma) \in L_j} \tau^{\sharp}(\gamma) |\det(\mathrm{Id} - P_{\gamma})|^{-1/2} \psi_j(\tau(\gamma))\geq  d_0 e^{- \frac{\mu_2}{2} \ell_j}.$$
This completes the proof of Theorem 1.1.

   \section{Separation of periodic orbits in phase space}

  We start with some preparations. 
  Let $ch(U)$ denote the convex hull of $U \subset \R^d$.
 For $ k = 1,...,r,$  define
  $$\ep_k = {\rm dist} \:\Bigl (ch (\bigcup_{k \neq j}D_j), D_k\Bigr).$$ 
  Set
  $$d_1 = \max_{k \neq j} {\rm dist}\: (D_k, D_j), \: d_2 = \frac{2 d_1}{d_0} \geq 1.$$
The condition (\ref{eq:1.1})  implies $\ep_k \neq 0,$
hence $\eta_0 > 0$.
  
   We recall some notations concerning billiard flow $\varphi_t$ (see for more details  \cite[Section 2]{chaubet2022}).
Let $S\R^d$ be the unit tangent bundle of $\R^d$ and let $\pi : S\R^d \to \R^d$ be the natural projection. For $x \in \partial D_j$, denote by $n_j(x)$ the {inward unit normal vector} to $\partial D_j$ at  $x$ pointing into $ D_j.$ Set $D = \bigcup_{j=1}^rD_j$ and  
$$\mathcal D= \{(x,v) \in S \R^d~:~x \in \partial D\}.$$

Define the grazing set $\mathcal D_\mathrm{g}= T(\pD) \cap \mathcal D$ and denote by $\langle . , . \rangle$ the scalar product in $\R^d$. We say that $(x,v) \in T_{\pD_j}\R^d$ is incoming (resp. outgoing) if $\langle v, n_j(x)\rangle > 0$ (resp. $\langle v, n_j(x) \rangle < 0$). Introduce
$$
\begin{aligned}
\mathcal D_\mathrm{in} &= \{ (x, v) \in \mathcal D~:~(x,v) \text{ is } {\rm incoming}\}, \\
\mathcal D_\mathrm{out}&= \{ (x, v) \in \mathcal D~:~ (x,v) \text{ is } {\rm outgoing}\}.
\end{aligned}
$$
For $(x, v) \in \mathcal D_\mathrm{in/out/g}$ denote by $v' \in \mathcal D_\mathrm{out/in/g}$  the image of $v$ by the reflexion $R_x$ with respect to $T_x(\partial D)$ at $x \in \partial D$,  that is
\[
v' = v - 2\langle v, n_j(x) \rangle n_j(x), \quad v \in S_x\R^d, \quad x \in \partial D_j.
\]

The billiard flow $(\phi_t)_{t \in \R} $ is a complete flow acting on $S\R^d \setminus \pi^{-1}(\mathring{D})$ which is defined as follows. For $(x,v) \in S\R^d\setminus \pi^{-1} (\mathring{D})$ we set
\[
\tau_\pm(x,v) = \pm \inf\{t \geqslant 0: x \pm tv \in \partial D\}.
\]
By convention, we have $\tau_\pm( x, v) = \pm \infty,$ if the ray $ x \pm t v$ has no common point with $\partial D$ for $\pm t > 0.$
For $(x,v) \in (S\R^d \setminus \pi^{-1}(D)) \cup \cal D_{\mathrm g}$ we define
\[
\phi_t(x,v) = (x + tv, v), \quad t \in [\tau_-(x,v), \tau_+(x,v)],
\]
while for $(x,v) \in \cal D_{\mathrm{in/out}}$, we set
\[
\phi_t(x,v) = (x+tv, v) \quad \text{ if } \quad \left \lbrace \begin{matrix} (x, v) \in \cal D_\mathrm{out},~t \in \left[0, \tau_+(x,v)\right], \vspace{0.2cm} \\ \text{or } (x, v) \in \cal D_{\mathrm{in}},~t \in \left[\tau_{-}(x, v) , 0\right], \end{matrix} \right.
\]
and
\[
\phi_t(x,v) = (x+tv', v') \quad \text{ if } \quad \left \lbrace \begin{matrix} (x, v) \in \cal D_\mathrm{in},~t \in \left]0, \tau_+(x,v)\right], \vspace{0.2cm} \\ \text{or } (x, v) \in \cal D_{\mathrm{out}},~t \in \left[\tau_{-}(x, v') , 0\right[. \end{matrix} \right.
\]
Introduce the non-grazing billiard table $M$ as 
$$
M = B / \sim, \quad B = S\R^d \setminus \left(\pi^{-1}(\mathring{D}) \cup \mathcal D_\mathrm{g}\right),
$$
where $(x,v) \sim (y,w)$ if and only if $(x,v) = (y,w)$ or
$$
x = y \in \partial D \quad \text{ and } \quad w = v'.
$$
The set $M$ is endowed with the quotient topology. 

The non-grazing flow {$\varphi_t$} is defined on $M$ as follows. For $(x,v) \in (S\R^d \setminus \pi^{-1}(D)) \cup \cal D_\mathrm{in}$ we set
\[
{\varphi_t([(x,v)])} = [\phi_t(x,v)], \quad t \in \left]\tau^{\mathrm{g}} _-(x,v), \tau^{\mathrm{g}} _+(x,v)\right[,
\]
where $[z]$ denotes the equivalence class of $z \in B$ for the relation $\sim$, and 
\[
\tau^{\mathrm{g}} _\pm(x,v) = \pm \inf\{t > 0:\phi_{\pm t}(x,v) \in \cal D_\mathrm{g}\}.
\]
Notice that $\tau^{\mathrm{g}}_{\pm}(x, v) \neq 0$ for $(x, v) \in {\mathcal D}_{\mathrm{in}}, $ while  it is possible to have $\tau_{\pm}^{\mathrm{g}}(x, v)  = \pm\infty.$ The above formula defines a flow on $M$ since each $(x,v) \in B$ has a unique representative in $(S\R^d \setminus \pi^{-1}(\mathring{D})) \cup \cal D_\mathrm{in}$.  {Therefore $\varphi_t$}  is continuous, but the flow trajectory of the point $(x, v)$  for times $t \notin \left]\tau^{\mathrm{g}} _-(x,v), \tau^{\mathrm{g}} _+(x,v)\right[$  is not defined. The flow $\varphi_t$ is defined for all $t \in \R$ for $z$ in the trapping set $K$ formed by  points $z \in M$ such that $- \tau_{-}^{\mathrm {g}}(z) = \tau_{+}^{\mathrm{g}}(z) = + \infty$ and 
\[\sup A(z) = - \inf A(z) = +\infty,\: {\rm when}\: A(z) = \{t \in \R: \pi(\varphi_t(z)) \in \pD\}.\]
(for more details see \cite[Section 2]{chaubet2022}). It is easy to see that the condition (\ref{eq:1.1}) implies the existence of $\psi_0 \in (\pi/ 2, \pi)$ with the following property: if three points $x, y, z$ belong to  $\pD_{i_1}, \: \pD_{i_2}, \: \pD_{i_3},\: i_1 \neq i_2, \: i_2 \neq i_3,$ respectively, the open segments $(x, y)$ and $(y, z)$ have no common points with $D$ and $[x, y]$ and $[y, z]$ satisfy the reflection law at $y$, then $\psi >\psi_0$, where $\psi \in (\pi/2, \pi)$ is the angle between $[y, z]$ and the normal $n_{i_2}(y)$ of $\pD_{i_2}$ at $y$.
 Introduce 
 $$\cD_{\mathrm{out}, 0} = \{(x, v) \in \pD \times \mathbb S^{d-1}:\:\langle v, n(x) \rangle \leq \cos \psi_0 < 0\}$$
 and define the {\it billiard ball map}  
\[
{\bf B} : \cD_{\mathrm{out}, 0} \ni (x, v) \longmapsto (y, w)   \in \overline{\cD}_\mathrm{out},\]
where
\[(y, w) = (x + \tau_{+}^{\mathrm g}(x, v)  v, R_x v),\]
and
$R_x : v \in S_x\R^d \to v'\in S_x\R^d$ is called {\it reflection map}. The map ${\bf B}(x, v)$ is defined if  $\tau_{+}^{\mathrm g}(x,  v) < +\infty$.

Consider a point $\rho = (x, v) \in \mathring{B}.$  Assume that $\phi_t(\rho)$ is  reflecting ray with $p \geq 1$ reflexions starting at $\rho \in \mathring{B}$ for $t = 0$ and going to 
\begin{equation} 
\label{eq:4.1}
 \phi_t(\rho)= \Bigl(\phi_{\sigma} \circ {\bf B}^p \circ R \circ \phi_{\tau}\Bigr) (\rho) \in \mathring{B}, \: t > 0,\: p \geq 1,\:\tau > 0,\:\sigma > 0
\end{equation}
with  $R \circ \phi_{\tau} (\rho) \in  \cD_{\mathrm {out}, 0}, $where 
$$R: \: (y, w) \in \cD_{\mathrm{in}} \rightarrow (y, R_y w) \in \cD_{\mathrm{out}}.$$  

 The map  ${\bf B} : \cD_{\mathrm {out}, 0} \rightarrow  \cD_{\mathrm {out}, 0}$ is $C^{\infty}$  smooth and
 $$\| \mathrm d {\bf B}\|_{T(\pD) \to T(\pD)} \leq A_0$$ 
with constant $A_0 > 1$ depending of $d_1, \psi_0$ and the sectional curvatures of $\partial D$ (see for instance, \cite[Appendix A]{chaubet2022}).  On the other hand, $R \circ \phi_{\tau}$ is also $C^{\infty} $ smooth and  we have the diagram
\[\begin{CD}
  \mathring{B} @> \phi_t  >>  \mathring{B} \\
@VVR \circ \phi_{\tau} V   @AA\phi_{\sigma} A\\
\mathcal D_{\mathrm{out}, 0} @ > {\bf B}^p >> \mathcal D_{\mathrm{out}, 0}
\end{CD}\]

Consequently, ${\mathrm d} \phi_t = {\mathrm d}\phi_{\sigma} \circ {\mathrm d} {\bf B}^p \circ {\mathrm d} R \circ {\mathrm d} \phi_{\tau}.$  
 Setting $\beta = 2\log A_0/ d_0$, one deduces
 \[ \|\mathrm d {\bf B}^p \|  \leq A_0^p = e^{\beta p d_0/2} <  e^{\beta t},\]
 where $t >  p d_0/2 $ is the length of $\gamma$
and we obtain the estimate
\begin{equation} \label{eq:4.2}
\|{\mathrm d}\phi_t(\rho) \|_{T(\mathring{B}) \to T(\mathring{B})} \leq C_0 e^{\beta t} 
\end{equation}
with $C_0 \geq 1, \: \beta > 0$ independent of $\rho, \tau, \sigma$ and $p.$ Here $\|. \|$ is the norm induced on $T(\mathring B)$ by the standard norm in $ S \R^d.$

Every  periodic  reflecting ray $\gamma$ is determined by a configuration
\[
\alpha_\gamma = (i_1, \dots, i_k),
\]
where $i_j \in \{1,\dots,r\}$, with $i_k \neq i_1,\: i_j \neq i_{j+1}$ for $j = 1,...,k-1$ and
$\alpha_{\gamma}$ is such that $\gamma$ has {\it successive reflections}  on $\pD_{i_1}, \dots, \pD_{i_k}$. The configuration $\alpha_{\gamma}$ is well defined modulo cyclic permutation. We say that  $\gamma$ has type $\alpha_{\gamma}$ and  $\alpha_{\gamma}$ has length $k$.  Moreover, according to \cite[Corollary 2.2.4]{petkov2017geometry}, for a fixed configuration $\alpha_{\gamma}$ there exists at most one periodic ray $\gamma$ in $\R^d \setminus \mathring{D}.$

Given a periodic ray $\gamma$ in $\R^d \setminus \mathring D$, define by $\tg$ one of the two possible lifts
$$\tg(t)= \{\varphi_t (x,\pm  v) \in M:\: 0 \leq t < \tau(\gamma), \: x \in \gamma, \: x \not \in \partial D\}$$
 on $M$, where $v \in \mathbf S^{d-1}$ is the direction of $\gamma$ at $x.$ Below we fix a lift $\tg = \tg(t)$ corresponding to $(x, v)$ and parametrised by the length. We will say that $\tg(t)$ follows a configuration $\alpha$, if $\pi(\tg(t))$ follows $\alpha.$
  Set
 $$\mathcal G (T) =  \{\tg: \: \pi(\tg) = \gamma \in \Pi,\:\tau(\gamma) \leq T\}.$$
 A point $ z \in \mathring{B}$ will be called {\it linearly connected} to $\tg$ if there exists $w \in \tg \cap \mathring{B}$ such that $\sigma z + (1-\sigma) w \in \mathring{B}, \: \forall \sigma \in [0, 1].$ For such points $ z \in \mathring{B}$ define
 $$\dist ( z, \tilde{\gamma}) = \min\{\|z- w\|:\: w \in \tilde{\gamma}\cap \mathring{B},  \sigma z+ (1-\sigma) w \in \mathring{B}, \: \forall \sigma \in [0, 1]\},$$
$$ \Theta_{\tilde{\gamma}}^{\ep} = \{ z \in \mathring{B}: \: z \: \text {is linearly connected to}\: \tg,\: \dist( z, \tilde{\gamma}) \leq \ep\}.$$   
    We will prove the following result.
    \begin{theo} There exists $\ep_0 > 0$ depending of $C_0, \: d_0$ and $\eta_0$ such that for any different periodic rays $\tg_1, \tg_2 \in \mathcal G(T) $ we have
    \[ \Theta_{\tilde{\gamma}_1}^{\ep_0 e^{-\beta(1+ d_2) T}} \cap \Theta_{\tg_2}^{\ep_0 e^{- \beta(1 + d_2) T}} = \emptyset.\]
    \end{theo}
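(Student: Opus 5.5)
We argue by contradiction. Suppose $z \in \Theta_{\tg_1}^{\ep e^{-\beta(1+d_2)T}} \cap \Theta_{\tg_2}^{\ep e^{-\beta(1+d_2)T}}$ with $\ep = \ep_0$ to be fixed. Then there are $w_1 \in \tg_1$, $w_2 \in \tg_2$, each linearly connected to $z$, with $\|w_i - z\| \leq \ep e^{-\beta(1+d_2)T}$. Hence $\|w_1 - w_2\| \leq 2\ep e^{-\beta(1+d_2)T}$. We may also require the segment $[w_1,w_2]$ to lie in $\mathring B$; if this fails, we move $w_1, w_2$ slightly along their orbits to reach the open region away from $\pD$, using the lower bound $d_0$ on the lengths of segments.

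The key step is to propagate this closeness along the flow using the derivative bound (\ref{eq:4.2}). Set $\tau_1 = \tau(\gamma_1) \leq T$, and consider the time interval $t \in [0, (1+d_2)T]$. This is long enough for $\gamma_1$ to complete at least $m(\gamma_1)$ and $m(\gamma_2)$ reflections: a period contains at most $\tau/(d_0/2)$ reflections, and each segment has length at most $d_1$. As long as the segment $\sigma w_1 + (1-\sigma) w_2$ stays in the region where $\varphi_t$ is smooth, i.e. all points follow the same sequence of reflections with the reflections in $\cD_{\mathrm{out},0}$, the mean value theorem applied to $\phi_t$ along this segment gives
\[
\|\varphi_t(w_1) - \varphi_t(w_2)\| \leq C_0 e^{\beta t} \|w_1 - w_2\| \leq 2 C_0 \ep\, e^{\beta t - \beta(1+d_2)T}.
\]
For $t \leq (1+d_2)T$ this is at most $2C_0\ep$, which is small.

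The main obstacle is justifying that the whole segment follows one configuration up to time $(1+d_2)T$; equivalently, that $\varphi_t(w_1)$ and $\varphi_t(w_2)$ hit the same obstacles in the same order. We argue by continuity in $t$. Let $t^*$ be the first time at which some point of the segment changes configuration. At that time the image of the segment is still $2C_0\ep$-close to the orbit $\gamma_1$. Under the non-eclipse condition (\ref{eq:1.1}), rays in a small tube around $\gamma_1$ cannot become tangent or switch obstacles: the gap $\eta_0 = \min_k \ep_k > 0$ (the separation from the convex hull of the other obstacles) together with the angle bound $\psi > \psi_0$ forces any ray within distance $< c(\eta_0, d_0)$ of a reflecting trajectory to reflect on the same obstacle, transversally. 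Choosing $\ep_0$ so that $2C_0\ep_0 < c(\eta_0,d_0)$ then gives a contradiction, so no switch occurs before $(1+d_2)T$. This is where the analysis of rays with tangent segments arises. The hardest part is the careful treatment of points of $\mathring B$ near the boundary or near grazing directions; we would handle it by working with the billiard map ${\bf B}$ on $\cD_{\mathrm{out},0}$ and the diagram above.

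Finally, since $\gamma_1$ and $\gamma_2$ are followed by nearby points over a time window covering full periods of both, both rays follow the same sequence of obstacles for at least $\max(m(\gamma_1), m(\gamma_2))$ consecutive reflections. The configuration of a periodic ray is the periodic continuation of its block $\alpha_\gamma$. If $\alpha_{\gamma_1}$ and $\alpha_{\gamma_2}$ agree along a stretch at least as long as both blocks (or their sum, using Fine–Wilf type periodicity, which is why the factor $d_2 = 2d_1/d_0$ appears), then, because both rays are primitive, $\alpha_{\gamma_1} = \alpha_{\gamma_2}$ up to cyclic permutation. By \cite[Corollary 2.2.4]{petkov2017geometry} the rays $\gamma_1$ and $\gamma_2$ then coincide, and the closeness of the directions excludes the reversed lift. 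This contradicts $\tg_1 \neq \tg_2$.
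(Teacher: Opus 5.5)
Your route differs from the paper's. The paper never compares $\tg_1$ and $\tg_2$ directly. It runs a continuity argument in the parameter $\sigma$ along $[\rho_1,\rho]$ and $[\rho_2,\rho]$ (cases (a1)--(c3)) to show that the common point $\rho$ would have to follow both $\beta_1$ and $\beta_2$ up to their first disagreement. You instead propagate the closeness of $w_1,w_2$ along the flow and finish with periodicity of configurations and \cite[Corollary 2.2.4]{petkov2017geometry}.

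That route is viable, but its last step does not go through as written. You only obtain $\max\{m(\gamma_1),m(\gamma_2)\}$ common consecutive reflections, and this does not force $\alpha_{\gamma_1}=\alpha_{\gamma_2}$. For example, $(1,2,3)$ and $(1,2,3,1,2)$ are distinct primitive admissible configurations whose periodic continuations agree on the first five letters and differ at the sixth. Fine--Wilf needs a common stretch of length $m(\gamma_1)+m(\gamma_2)-\gcd(m(\gamma_1),m(\gamma_2))$, and the factor $d_2$ does not supply it. With segment lengths in $[d_0/2,d_1]$, the window of length $(1+d_2)T$ is only guaranteed about $T/d_1+2T/d_0$ reflections, which is not $\geq 4T/d_0$, the only available upper bound for $m(\gamma_1)+m(\gamma_2)$.

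The fix is to count periods rather than segments. Since $d_2\geq 1$, the window has length at least $2T\geq 2\max\{T_1,T_2\}$. So $\varphi_t(w_1)$ makes at least $2m(\gamma_1)$ reflections in it and $\varphi_t(w_2)$ at least $2m(\gamma_2)$. Because they hit the same obstacles, the common stretch has length at least $2\max\{m(\gamma_1),m(\gamma_2)\}-1\geq m(\gamma_1)+m(\gamma_2)-1$. You must also justify that a primitive ray has a primitive configuration: if $\alpha_\gamma=w^k$ with $k\geq 2$, shifting the tuple of reflection points of $\gamma$ by $|w|$ gives a periodic ray of the same type, so by uniqueness $\gamma$ is a $k$-fold iterate. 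Fine--Wilf then forces $m(\gamma_1)=m(\gamma_2)$ and equal configurations up to rotation.

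Two further points. First, the straight segment $[w_1,w_2]$ need not lie in $\mathring B$, and sliding $w_1,w_2$ along their orbits does not obviously help. Use instead the legs $[w_1,z]$ and $[z,w_2]$, which lie in $\mathring B$ by the definition of $\Theta$. Then $z$ follows the obstacle sequence of $\tg_1$ and of $\tg_2$ over the whole window; this is exactly how the paper uses $\rho$.

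Second, your tube statement is the real content of the theorem, and you only assert it. The paper proves what is needed through its case analysis: closedness of $\cD_{\mathrm{out},0}$ excludes tangency, the $\eta_0$-separation from $ch(\bigcup_{j\neq q}D_j)$ excludes switching obstacles, and a limiting argument excludes escape. A compactness proof of your claim would give a constant depending on the obstacles through more than $\eta_0,d_0$. That is harmless for the existence of $\ep_0$, but it is not the dependence you state.

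Once the count is fixed, your Fine--Wilf ending has one advantage: it makes explicit how long the time window must be. The paper handles this point by cancelling repeated blocks of $\alpha_1$ in $\beta_1,\beta_2$.
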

 
     \begin{proof} Choose $\ep_0 = \min\{\frac{\eta_0}{2C_0},\frac{d_0}{4C_0}\}.$ Let $\tg_k = \tg_k(t) \in \mathcal G(T), \: k = 1,2$, be two different periodic rays with configurations $\alpha_k$ having lengths $p_k$, respectively.  The rays below are parametrised by the length $t \geq 0.$ Let $\tg_k(t)$ have periods $T_k \leq T,\: k = 1, 2$ and let $\alpha_1 = (i_1,...,i_{p_1}).$
     Assume that 
     \begin{equation} \label{eq:4.3}
      \Theta_{\tg_1}^{\ep_0 e^{-\beta(1 + d_2) T}} \cap \Theta_{\tg_2}^{\ep_0 e^{- \beta(1 + d_2) T}} \neq \emptyset.
      \end{equation}
     Then there exist points $\rho_k = (x_k, \xi_k) \in  \mathring{B} \cap \tilde{\gamma}_k,\: k = 1,2,$ and $\rho = (y, \xi) \in \mathring{B}$ such that $\|\rho- \rho_k\| \leq \ep_0e^{- \beta (1 + d_2) T}, \: k = 1,2$ and 
     $$\nu_k(\sigma) = (x_k(\sigma), \xi_k(\sigma) ) = (1- \sigma)\rho_k + \sigma\rho \in \mathring{B},\: \sigma \in [0, 1], \: k = 1, 2.$$ 
     Assume that $x_1$ lies on the segment connecting $u_{i_{p_1}}\in \pD_{i_{p_1}}$ and $u_{i_1} \in \pD_{i_1}$, while $x_2$ lies on the segment connecting $w_{j_{p_2}} \in \pD_{j_{p_2}}$ and $w_{j_1} \in \pD_{j_1}.$ If $i_1 = j_1$, since $ \alpha_1 \neq \alpha_2$, there exist
     $i_n, i_m \in \{1,...,r\},\: i_n \neq i_m,$ such that the ray $\tg_1(t)$ issued from $\rho_1$ follows a configuration $\beta_1 = (i_1,...,i_{n-1}, i_n),\: 2  \leq n \leq p_1,$ while the ray $\tg_2(t)$ issued form $\rho_2$ follows a configuration $\beta_2 =(i_1,...,i_{n- 1}, i_m).$
    More precisely,  $i_n$ and $i_m$ are the first indices in the configurations $\beta_1, \beta_2$, where we have difference. If $i_1 \neq j_1$ we have configurations $\beta_1 = (i_1,...),\: \beta_2 = (j_1,...).$ This case  can be covered by the same  argument since we prove that $\tg_{\omega}(t)$ defined below follows the configurations $\beta_1$ and $\beta_2$.We omit the details.
     
    Without loss of generality we may assume that $\beta_1, \beta_2$ have lengths less or equal to $p_1,$  that is $n \leq p_1.$ Indeed, if 
     $$\beta_1 = (\underbrace{\alpha_1,...,\alpha_1}_{\text {k times}}, i_1,...,i_{n-1}, i_n),\: \beta_2 = (\underbrace{\alpha_1,...,\alpha_1}_{\text {k times}}, i_1,...,i_{n-1}, i_m),\: n \leq p_1,$$ 
     we may cancel $\underbrace{\alpha_1,...,\alpha_1}_{\text {k times}}.$
     
      For $\sigma$ small enough the  rays $\tg_\sigma(t), \: t \geq 0,$ issued from $\nu_1(\sigma)$ will follow the configuration $\beta_1$ with reflections on $\pD_{i_1},...,\pD_{i_n},$ and the ray $\tz_{\sigma}(t),\: t \geq 0,$ issued from $\mu_1(\sigma) = (x_1(\sigma), - \xi_1 (\sigma)) \in \mathring{B}$ follows a configuration $\bar{\beta}_1 = (i_{p_1}, ...).$
 For $\sigma$ small the $n$ successive reflections of $\tg_{\sigma}(t)$ belong to $\cD_{\mathrm{out}, 0},$ as well as the reflection of $\tz_{\sigma}(t)$ on $\pD_{i_{p_1}}.$
     In general, the rays $\tg_\sigma(t)$ are not periodic, so after successive reflexions on $\pD_{i_1},...,\pD_{i_n}$ they may have other reflexions or glancing points and also they may escape to infinity.
     
       Let $0 \leq t\leq d_2 T$ and assume that $\phi_t(v_1(\sigma)) \in \mathring{B}$ for $0 \leq \sigma_0 \leq \sigma \leq  \sigma_1 \leq 1$  has the form (\ref{eq:4.1}). Therefore 
    \[\| \phi_t(v_1(\sigma_0) )- \phi_t (v_1(\sigma_1)) \|  = \|\int_{\sigma_0}^{\sigma_1} \frac{d}{ds}( \phi_t(v_1(\sigma)) d\sigma \| \leq C_0 e^{\beta t} \|v_1(\sigma_0)- v_1(\sigma_1)\| \]
    
    \begin{equation} \label{eq:4.4}
     \leq C_0 e^{\beta d_2T } \| \rho_1 - \rho\| \leq \min\big\{ \frac{\eta_0}{2}, \frac{d_0}{4}\big\}e^{-\beta T},
    \end{equation}  
    where we have used (\ref{eq:4.2}).    
     Let 
     $$\omega = \max\{ \sigma \in [0, 1]:\tg_{\sigma}(t)\: {\rm does}\: {\rm not}\: {\rm follow} \: \beta_1 $$
     $$\text{ with reflections on } \: \pD_{i_1},...,\pD_{i_n}\:\text{which belong to} \:\cD_{\mathrm{out}, 0}$$ 
     \:\:\:\:\:\:\:\:\:\:\:\:\:\:or $\tz_{\sigma}(t)$ \: $\text{has not a reflection on}\: \pD_{i_{p_1}}$ which is in  $\cD_{\mathrm{out}, 0}$.\}
  
    For the rays $\tg_{\omega}(t),\: \tz_{\omega}(t)$  there are several cases.
      
     (a1). $\tg_{\omega}(t)$ follows a configuration  $\zeta = (i_1,...,i_s), \: 2 \leq s \leq n,$ with reflections on $\pD_{i_1},...,\pD_{i_{s-1}}$ (on $\pD_{i_1}$ if $s = 2$) and tangency on $\partial D_{i_s}$.
        
      (a2). $\tg_{\omega}(t)$ follows a configuration  $\zeta = (i_1,...,i_{s-1}, i_q),\: 2 \leq s \leq n,\: q \neq s$ with reflections on $\pD_{i_1},...,\pD_{i_{s-1}},$ and reflection or tangency on $ \pD_{i_q}.$     
      
     (a3). $\tg_{\omega}(t)$ follows a configuration  $\zeta = (i_1,...,i_{s-1}),\: 2 \leq s \leq n,$ with reflections on $\pD_{i_1},...,\pD_{i_{s-1}}$. After reflection on $\pD_{i_{s-1}}$ the ray $\tg_{\omega}(t)$ does not meet $\pD$ and it goes to infinity.
  
       In the cases (a1)-(a3) we have $\beta_1 = (i_1,...,i_s,...).$ 
                
     (b1). $\tg_{\omega}(t)$ follows a configuration $\zeta = (i_1,...)$ with  tangency on $\pD_{i_1}.$
     
     (b2). $\tg_{\omega}(t)$ follows a configuration $\zeta = (i_q,...), \: q \neq 1,\: q \neq p_1$ with  reflection or tangency on $\pD_{i_q}.$
     
     (b3). $\tg_{\omega}(t)$ does not meet $\pD$ and it goes to infinity.
     
      (c1).  $\tg_{\omega}(t)$ follows $\beta_1$, while $\tz_{\omega}(t)$ follows a configuration $\bar{\beta}_1 = (i_{p_1},...)$ with  tangency on $\pD_{i_{p_1}}.$
      
       (c2). $\tg_{\omega}(t)$ follows $\beta_1$, while $\tz_{\omega}(t)$ follows a configuration $\zeta = (i_{q_1},...), \:q_1 \neq p_1, \: q_1 \neq i_1$ with  reflection or tangency on $\pD_{i_q}.$
     
      (c3).  $\tg_{\omega}(t)$ follows $\beta_1$, while $\tz_{\omega}(t)$ does not meet $\pD$ and it goes to infinity. 
     
      We will show that the cases (a1) - (c3)  lead to contradiction.
           
         (a1). Let  $\tg_{\omega}(t)$ have a  tangency at $v_{i_s} \in \pD_{i_s} \times {\mathbb S}^{d-1}$ for time $t_{\omega}$. 
   The rays $\tg_{\sigma}(t)$ with $0 < \sigma < \omega $ have reflections which belong to $(\pD_{i_s} \times {\mathbb S}^{d-1}) \cap D_{\mathrm{out}, 0}$ for $t_{s,\sigma}$ and $t_{s,\sigma} \to t_\omega$ as $\sigma \to \omega.$ By continuity, we obtain $v_{i_s} \in \cD_{\mathrm{out}, 0}$ which yields a contradiction with the tangency of $v_{i_s}.$
       
       (a2).  Let $\tg_{\omega}(t)$ have reflection at $v_{i_{s-1}} \in \pD_{i_{s-1}}$ for time $t_{s-1}$ and reflection or tangency at $v_{i_q} \in \pD_{i_q}$ for time $t_q$. Let $\tg_{\sigma}(t)$ with $0 < \sigma < \omega$ have reflections at $w_{i_{s-1}, \sigma} \in \pD_{i_{s-1}} $ and $w_{i_s, \sigma} \in \pD_{i_s}$ for times $t_{s-1, \sigma}$ and $t_{s, \sigma}$, respectively.  For $\sigma$ close to $\omega$ by (\ref{eq:4.2}) we deduce that $t_{s-1,\sigma}$ is close to $t_{s-1}.$ This implies $t_q > t_{s-1, \sigma}$ for small $\sigma.$ We fix $0 <\sigma < \omega$ with this property. Notice that
       $$t_q \leq s d_1 \leq \frac{2d_1}{d_0}T_1 \leq d_2 T.$$
       Similarly, $t_{s, \sigma} \leq d_2 T.$
       There are two possibilities: (I). $t_{s, \sigma} < t_q,$  (II). $ t_{s, \sigma} \geq t_q.$
       In the case (I), we apply (\ref{eq:4.4}) with $t = t_{s, \sigma}, \sigma_0 = \sigma, \sigma_1 = \omega$ and obtain
       $$\|\pi(\phi_{t_{s, \sigma}}(v_1(\omega))) - w_{i_s,\sigma}\| \leq \frac{\eta_0}{2}.$$   
        On the other hand, $\pi(\phi_{t_{s, \sigma}}(v_1(\omega)))$ lies on the segment connecting $v_{i_{s-1}}$ and $v_{i_q}.$ Hence this point belongs to $ch( \bigcup_{j \neq s} D_j)$ and the above inequality implies a contradiction. 

      Passing to the case (II), first suppose that $\tg_{\omega}(t)$ has a reflection at $v_{i_q}.$ We apply (\ref{eq:4.4}) with $t = t_q, \sigma_0 = \sigma, \sigma_1 = \omega$ and deduce  
        $$\|\pi(\phi_{t_q}(v_1(\sigma))) - v_{i_q}\| \leq \frac{\eta_0}{2}.$$ 
       Since $\pi(\phi_{t_q}(v_1(\sigma)))$ lies on the segment connecting $w_{s-1, \sigma}$ and $w_{s, \sigma},$ we obtain again a contradiction because $v_{i_q} \notin ch(\bigcup_{j \neq q} D_j).$
        Now suppose that $\tg_{\omega}(t)$ has a tangency at $v_{i_q}.$ Then for sufficiently small $\ep > 0$ we have $t_{s, \sigma} > t_q - \ep$   
and $v_{q, \ep} =\pi(\tg_{\omega}(t_q - \ep)) \in \mathring{B}.$  Moreover, for small $\ep$ we have ${\rm dist}\:\Bigl(v_{q, \ep}, ch(\bigcup_{j \neq q} D_j)\Bigr) > \frac{\eta_0}{2}.$ We repeat the above argument applying (\ref{eq:4.4}) with $t = t_q - \ep,$ and obtain a contradiction. 
            
              (a3). We use the notations in (a1) and (a2).  For $ t > t_{s-1}$ the ray $\pi(\tg_{\omega}(t))$  does not meet $D$ and for $t > t_{s-1} + \ep_0 > t_{s-1}$ we have ${\rm dist}\: ( \pi(\tg_{\omega}(t)), D) \geq \ep_1 >  0.$ Since $t_{s-1, \sigma }$ is close to $t_{s-1}$ for $\sigma$ close to $\omega,$ we have $t_{s, \sigma} \geq t_{s-1, \sigma} + d_0/2 >t_{s-1} + \ep_0$ choosing  $0 < \ep_0 < \frac{d_0}{4}$ and $\sigma$ sufficiently close to $\omega.$ As above, we obtain $t_{s, \sigma} \leq d_2 T_1.$ Now we apply (\ref{eq:4.4}) with $ t = t_{s, \sigma}, \sigma_0 = \sigma, \sigma_1 = \omega$  and obtain 
  \[    \| \pi(\phi_{t_{s,\sigma}}(v_1(\sigma)))- \pi(\phi_{t_{s, \sigma}}(v_1(\omega))) \|    \leq   C_0 e^{\beta d_2 T} \|v_1(\sigma) - v_1(\omega)\|.\]  
  Taking  $\sigma$ sufficiently close to $\omega$, the right hand side of the above inequality will be less than $\ep_1$ and we obtain a contradiction with 
   $${\rm dist}\: ( \pi(\tg_{\omega}(t_{s, \sigma})), D) \geq \ep_1 >  0.$$   
      
  (b1).  We repeat the argument of (a1) by using the fact that the rays $\tg_{\sigma}(t)$ with $0 < \sigma < \omega $ have reflections which belong to  $(\pD_{i_1} \times {\mathbb S}^{d-1}) \cap D_{\mathrm{out}, 0}.$
  
  (b2). Let $\tg_{\omega}(t)$ have a refection or tangency at $v_{i_q} \in \pD_{i_q}$ for time $t_q$ and let $\tg_{\sigma}(t), \: 0 < \sigma < \omega$ have a refection at $w_{i_1, \sigma} \in \pD_{i_1}$ for time $t_{i_1, \sigma}$. Let $\tz_{\sigma}(t)$ have reflection at $w_{i_{p_1}, \sigma} \in \pD_{i_{p_1}}.$ For $\sigma$ sufficiently small, we have  $t_q < t_{i_1, \sigma}.$ Indeed, if $t_q \geq t_{i_1, \sigma},$ then the ray $\pi(\tg_{\sigma}(t))$ for time $0<  t \leq t_q$ lies in the complement $\R^d \setminus D_{i_1}$. This is impossible because $\pi(\tg_{\sigma}(t))$ has a reflection for $t = t_{i_1, \sigma}.$ We fix $0 < \sigma < \omega$ with this property. Suppose that $\tg_{\omega}(t)$ has a reflection at $v_{i_q}.$  Applying (\ref{eq:4.4}) with $t = t_q, \sigma_0 = \sigma, \sigma_1 = \omega,$  we obtain $\|\pi(\tg_{\sigma}(t_q) )- v_{i_q}\| \leq \eta_0 / 2.$ On the other hand, $\pi(\tg_{\sigma}(t_q))$ belongs to the segment connecting $w_{i_1, \sigma}$ and $w_{i_{p_1}, \sigma}$ which lies in $ch(\bigcup_{j \neq q} D_j)$ and we obtain a contradiction. In the case, when $\tg_{\omega}(t)$ has a tangency at $v_{i_q}$ we consider a point $\tg_{\omega}(t_q - \ep) \in \mathring{B}$ with sufficiently small $ \ep > 0$ and $t_q - \ep < t_{i_1,\sigma}.$ For small $\ep$ we will have $\rm{dist}\: \Bigl(\pi(\tg_{\omega}(t_q - \ep)), ch(\bigcup_{j \neq q} D_j)\Bigr) \geq \frac{2\eta_0}{3}.$  We apply (\ref{eq:4.4}) with $t = t_q - \ep$ and one obtains again a contradiction.
  
  (b3).  We use the fact that $\tg_{\sigma}(t), \: 0 < \sigma < \omega,$ has a refection on $\pD_{i_1}$ and repeat the argument of (a3).
  
  (c1).  The rays $\tz_{\sigma}(t),\: 0 < \sigma < \omega,$ have reflections  which belong to  $(\pD_{i_{p_1}} \times {\mathbb S}^{d-1}) \cap D_{\mathrm{out}, 0}$  and we are in the situation treated in (a1). We repeat the argument of (a1) to obtain a contradiction.
   
    (c2). This case is similar to (b2) and can be treated by the same argument. We omit the details.
    
    (c3). This case is similar to the cases (a3) and (b3) and can be covered by a similar argument. We omit the details.
    
Finally, notice that by continuity  the reflections  of the ray $\tg_{\omega}(t)$ on $\pD_{i_1},...,\pD_{i_s}$ and that of $\tz_{\omega}(t)$ on $\pD_{i_{p_1}}$ are in $\mathcal D_{\mathrm{out}, 0}.$

Combining the above cases, we deduce that the existence of $0 <\omega \leq 1$ with the above property is impossible. Thus we conclude that the ray $\tg_{\rho}(t)$ issued from $\rho$ follows the configuration $\beta_1.$ We repeat the above argument for the periodic ray $\tg_{2}(t)$ issued from $\rho_2$ and deduce that $\tg_{\rho}(t)$ follows the configuration $\beta_2.$ Since $\beta_1 \neq \beta_2,$ this implies a contradiction with the assumption (\ref{eq:4.3}).
 \end{proof}
\begin{corr}
Let $\tilde{\delta}_1, \: \tilde{\delta}_2$ be two periodic primitive rays with periods $T_k \leq T,\: k = 1, 2,$ passing through points $\rho_k =(x, \xi_k) \in B, k = 1,2$.  Let $(x_k, v_k)$ be the  outgoing representative of $\rho_k$. The we have 
\begin{equation} \label{eq:4.6}
\| v_1 - v_2\| \geq \ep_0e^{- \beta(1 + d_2) T}.
\end{equation}
If $x \notin \pD$, we take $(x, \xi_k)$ as outgoing representative.
\end{corr}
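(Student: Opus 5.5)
The plan is to deduce the corollary directly from Theorem 4.2 by contradiction. Assume $\|v_1 - v_2\| < \ep_0 e^{-\beta(1+d_2)T}$. The two rays $\tilde\delta_1, \tilde\delta_2$ pass through the same base point $x$. They are distinct primitive periodic rays, so their configurations differ, which is what Theorem 4.2 uses. We will produce a point $z \in \mathring{B}$ lying in both $\Theta_{\tilde\delta_1}^{\ep_0 e^{-\beta(1+d_2)T}}$ and $\Theta_{\tilde\delta_2}^{\ep_0 e^{-\beta(1+d_2)T}}$, contradicting Theorem 4.2.

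First consider the case $x \notin \pD$. Then $\rho_k = (x, v_k) \in \mathring{B}$, and we take $z = \rho_1$. Trivially $z$ is linearly connected to $\tilde\delta_1$ with distance $0$. For $\tilde\delta_2$ we use $w = \rho_2$. The segment $\sigma \rho_1 + (1-\sigma)\rho_2 = (x, \sigma v_1 + (1-\sigma) v_2)$ keeps the base point $x \in \R^d\setminus D$ fixed, so it stays in the interior region, at least once we normalize the direction. Here I would note that for $\|v_1-v_2\|$ small the convex combination of directions is nonzero, and that the paper's linear structure on $\mathring B$ is really the ambient one, used as in the proof of Theorem 4.2. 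Hence $\dist(z, \tilde\delta_2) \leq \|v_1 - v_2\| < \ep_0 e^{-\beta(1+d_2)T}$. Both $\Theta$ sets contain $z$, which is a contradiction.

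Next consider the case $x \in \pD$, say $x \in \pD_j$. Here the representatives are not in $\mathring{B}$, so we push slightly along the flow. The outgoing vectors satisfy $\langle v_k, n_j(x)\rangle \leq \cos\psi_0 < 0$. For small $s>0$, the points $\varphi_s(x, v_k) = (x + s v_k, v_k)$ lie on $\tilde\delta_k$ and in $\mathring{B}$. Their distance is $s\|v_1 - v_2\| + \|v_1 - v_2\| = (1+s)\|v_1-v_2\|$. Since the assumed inequality is strict, we choose $s$ small enough that $(1+s)\|v_1-v_2\| < \ep_0 e^{-\beta(1+d_2)T}$. Then take $z = \varphi_s(x,v_1)$ and run the previous argument. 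This requires the segment $(x + s(\sigma v_1 + (1-\sigma)v_2), \cdot)$ to avoid $D$. That holds because every convex combination of the $v_k$ is strictly outgoing at $x$, by convexity of the half-cone $\{\langle v, n_j\rangle < 0\}$, and $D_j$ is strictly convex, so short segments from $x$ in outgoing directions leave $D$.

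The main obstacle is this boundary case: verifying linear connectedness inside $\mathring{B}$, and the slight abuse that convex combinations of unit vectors are not unit vectors. I would handle the latter exactly as the paper implicitly does in the proof of Theorem 4.2, either by working in the ambient linear structure or by normalizing and absorbing a harmless constant factor through the strict inequality.
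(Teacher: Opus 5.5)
Your proposal is correct and is essentially the paper's proof. The case $x\notin\partial D$ follows directly from Theorem 4.1 (the theorem you call 4.2). For $x\in\partial D$ the paper likewise moves both outgoing representatives a distance $\eta<\min\{1,d_0/2\}$ along the rays and gets $\|\rho_1-\rho_2\|\le\sqrt{1+\eta^2}\,\|v_1-v_2\|$.

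The differences are cosmetic. The paper takes the midpoint $\rho=(\rho_1+\rho_2)/2$ and absorbs the factor via $\sqrt{1+\eta^2}/2<1$, whereas you take $z=\varphi_s(x,v_1)$ and use the strict inequality with $s\to 0$. Your ``distance is $(1+s)\|v_1-v_2\|$'' should read ``at most'', since the exact distance is $\sqrt{1+s^2}\,\|v_1-v_2\|$. You also check the linear connectedness, which the paper leaves implicit.
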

\begin{proof}If $x \notin \pD$, the statement is a trivial consequence of Theorem 4.1. If $x \in \pD$, consider points $y_k \in \pi(\delta_k)$ in $\R^d \setminus D$ with $\|y_k - x\| = \eta < \min\{1, \frac{d_0}{2}\}.$ Assume that 
\[\| v_1 - v_2\| < \ep_0e^{- \beta(1 + d_2) T}.\]
Then $\|y_1 - y_2\| = 2 \eta \sin \frac{\psi}{2},$ where $\psi$ is the angle between the directions $v_1 \in {\mathbb S}^{d-1}$ and $v_2 \in {\mathbb S}^{d-1}$. Clearly,
\[ \|v_1 - v_2\|^2 = 2(1 - \cos \psi) = 4 \sin^2 \frac{\psi}{2}.\]
For the points $\rho_k = (y_k, v_k) \in \mathring{B}$ we deduce 
\[\|\rho_1 - \rho_2\| \leq  \sqrt{1 + \eta^2} \|v_1- v_2\| \leq \ep_0 \sqrt{1 + \eta^2} e^{-\beta(1 + d_2)T}.\]
and for $\rho = \frac{\rho_1 + \rho_2}{2} \in \mathring{B},$ this implies
 \[\|\rho- \rho_k\| < \ep_0e^{- \beta (1 + d_2) T}, \: k = 1, 2.\]
Applying Theorem 4.1, we obtain a contradiction.
\end{proof}

\section{Open problem}

The statement of Theorem 4.1 is true for obstacles satisfying (\ref{eq:1.1}). To apply a perturbation arguments it is important to know that for every $\tg \in \mathcal G(T)$  with $T \geq T_0$ and $x \in \pi(\tg) \cap \pD$ there exist $\alpha \gg 1, \: T_0 \gg 1$ and a neighbourhood 
$$B(x, e^{-\alpha T}) = \{ y \in \pD: \: \|x- y\| \leq e^{- \alpha T}\}$$
 such that
 \begin{equation} \label{eq:5.1} 
\forall \zeta \in \mathcal G(T) \setminus \tg,\;B(x, e^{- \alpha T}) \cap \zeta  =\emptyset.
\end{equation}
In general this is not true since there are different periodic rays passing through a point $x \in \pD$ with different directions (see  \cite[Section 2.1]{petkov2017geometry} for examples).
 On the other hand, in \cite[Theorem 6.2.3]{petkov2017geometry} it was established that for generic obstacles for every $x \in \pD$ there exists at most {\it one direction} $\xi \in \mathbb S^{d-1}$ (up to symmetry with respect to the normal to $\pD$ at $x$) such that $(x, \xi)$ could generate a periodic ray. The reader may consult \cite[Section 6.2]{petkov2017geometry}) for the precise definition of generic obstacles. Since there are only finite number periodic rays with period $T$, for generic obstacles every point $x \in \pD$ has a suitably small neighbourhood with the property mentioned above. However, the size of these neighbourhoods could be extremely small and their dependence of $T$ is unknown. We conjecture that there exist $\alpha \gg 1, T_0 \gg 1,$ such that for generic obstacles for all $\zeta \in \mathcal G(T)\setminus \tg,\:T \geq T_0$  the property (\ref{eq:5.1}) holds. For metrics on compact Riemannian manifolds with negative curvature a relation similar to (\ref{eq:5.1})  has been proved in  \cite[Proposition 4]{Schenck2020}) without a generic assumption.  
   
    \bibliographystyle{alpha}
\bibliography{bib.bib}

\begin{thebibliography}{Pet25b}

\bibitem[CP]{chaubet2022}
Yann Chaubet and Vesselin Petkov.
\newblock Dynamical zeta functions for billiards.
\newblock {\em Ann. Inst. Fourier, (Grenoble), doi:10.5802/aif.3743}, Online
  first:56 p.

\bibitem[DJ16]{Dolgopyat2016}
Dmitry Dolgopyat and Dmitry Jakobson.
\newblock On small gaps in the length spectrum.
\newblock {\em J. Mod. Dyn.}, 10:339--352, 2016.

\bibitem[DZ19]{dyatlov2019mathematical}
S.~Dyatlov and M.~Zworski.
\newblock {\em Mathematical Theory of Scattering Resonances}.
\newblock Graduate Studies in Mathematics. American Mathematical Society, 2019.

\bibitem[Gio10]{Giol2010}
Julien Giol.
\newblock On the asymptotic distribution of closed orbits for a class of open
  billiards.
\newblock {\em Serdica Math. J.}, 36(1):89--98, 2010.

\bibitem[Ika90a]{ikawa1990poles}
Mitsuru Ikawa.
\newblock On the distribution of poles of the scattering matrix for several
  convex bodies.
\newblock In {\em Functional-analytic methods for partial differential
  equations ({T}okyo, 1989)}, volume 1450 of {\em Lecture Notes in Math.},
  pages 210--225. Springer, Berlin, 1990.

\bibitem[Ika90b]{ikawa1990zeta}
Mitsuru Ikawa.
\newblock Singular perturbation of symbolic flows and poles of the zeta
  functions.
\newblock {\em Osaka J. Math.}, 27(2):281--300, 1990.

\bibitem[LP89]{lax1989scattering}
Peter~D. Lax and Ralph~S. Phillips.
\newblock {\em Scattering theory}, volume~26 of {\em Pure and Applied
  Mathematics}.
\newblock Academic Press, Inc., Boston, MA, second edition, 1989.
\newblock With appendices by Cathleen S. Morawetz and Georg Schmidt.

\bibitem[Mor91]{morita1991symbolic}
Takehiko Morita.
\newblock The symbolic representation of billiards without boundary condition.
\newblock {\em Transactions of the American Mathematical Society},
  325(2):819--828, 1991.

\bibitem[Pet99]{petkov1999zeta}
Vesselin Petkov.
\newblock Analytic singularities of the dynamical zeta function.
\newblock {\em Nonlinearity}, 12(6):1663--1681, 1999.

\bibitem[Pet25a]{Petkov2025cluster}
Vesselin Petkov.
\newblock Dirichlet zeta function for billiard flow.
\newblock {\em Archiv der Mathematik}, 125(2):201--212, 2025.

\bibitem[Pet25b]{Petkov2024}
Vesselin Petkov.
\newblock On the number of poles of the dynamical zeta functions for billiard
  flows.
\newblock {\em Discrete and Continuous Dynamical Systems}, 95(9):3174--3194,
  2025.

\bibitem[PP90]{Parry1990}
William Parry and Mark Pollicott.
\newblock Zeta functions and the periodic orbit structure of hyperbolic
  dynamics.
\newblock {\em Ast\'{e}risque}, (187-188), 1990.
\newblock 268 pp.

\bibitem[PS17]{petkov2017geometry}
Vesselin~M. Petkov and Luchezar~N. Stoyanov.
\newblock {\em Geometry of the generalized geodesic flow and inverse spectral
  problems}.
\newblock John Wiley \& Sons, Ltd., Chichester, second edition, 2017.

\bibitem[Sch20]{Schenck2020}
Emmanuel Schenck.
\newblock Exponential gaps in the length spectrum.
\newblock {\em J. Mod. Dyn.}, 16:207--223, 2020.

\bibitem[Sto09]{stoyanov2009poles}
Luchezar Stoyanov.
\newblock Scattering resonances for several small convex bodies and the
  {L}ax-{P}hillips conjecture.
\newblock {\em Memoirs Amer. Math. Soc.}, 199(933), 2009.
\newblock vi+76 pp.

\end{thebibliography}

\end{document}